\newtheorem{theorem}{Theorem}[section]
\newtheorem{corollary}[theorem]{Corollary}
\newtheorem{lemma}[theorem]{Lemma}
\newtheorem{question}[theorem]{Question}
\numberwithin{equation}{section}
\theoremstyle{definition}
\newtheorem{definition}[theorem]{Definition}
\theoremstyle{remark}
\newtheorem{remark}[theorem]{Remark}
\renewcommand{\leq}{\leqslant}  
\renewcommand{\geq}{\geqslant}
\newcommand{\mcup}{\textstyle \bigcup\limits}
\newcommand{\mcap}{\textstyle \bigcap\limits}
\DeclareMathOperator\diam{diam}
\DeclareMathOperator\Var{Var}
\DeclareMathOperator\Vol{Vol}
\titleformat{\section}{\Large\bfseries}{\thesection}{1em}{}
\titleformat{\subsection}{\bfseries}{\thesubsection}{1em}{}
\newcolumntype{e}{>{\displaystyle}r @{\,} >{\displaystyle}c @{\,} >{\displaystyle}l}
  \newcounter{constant}
  \newcommand{\newconst}[1]{\refstepcounter{constant}\label{#1}}
  \newcommand{\useconst}[1]{c_{\textnormal{\tiny \ref{#1}}}}
\def\clap#1{\hbox to 0pt{\hss#1\hss}}
\def\arraypar#1{\parbox[c]{\textwidth - 2cm}{\centering #1}}
\begin{document}

\title{Percolation and local isoperimetric inequalities} 

\author{Augusto Teixeira\footnote{IMPA, Estrada Dona Castorina 110, 22460-320, Rio de Janeiro RJ, Brazil\newline
e-mail: {\itshape \texttt{augusto@impa.br}}}}

\date{\today}
\maketitle

\begin{abstract}
  In this paper we establish some relations between percolation on a given graph $G$ and its geometry.
  Our main result shows that, if $G$ has polynomial growth and satisfies what we call the \emph{local isoperimetric inequality} of dimension $d > 1$, then $p_c(G) < 1$.
  This gives a partial answer to a question of Benjamini and Schramm from \cite{BS96}.
  As a consequence of this result and \cite{MR1765172} we derive, under the additional condition of bounded degree, that these graphs also undergo a non-trivial phase transition for the Ising-Model, the Widom-Rowlinson model and the beach model.
  Our techniques are also applied to dependent percolation processes with long range correlations.
  We provide results on the uniqueness of the infinite percolation cluster and quantitative estimates on the size of finite components.
  Finally we leave some remarks and questions that arise naturally from this work.

  \vspace{0.5cm}\noindent {\it Math. Subject Classification:} 60K35, 82B43, 05C10.
\end{abstract}

\section{Introduction}

The mathematical interest in percolation dates back to the works of Broadbent and Hammersley, \cite{PSP:2048852}.
This simple mathematical model they have introduced on $\mathbb{Z}^d$ (more generally on crystals) has motivated intense research both in the physics and mathematics literature, giving rise to beautiful theories and challenges.
Excellent introductions to the mathematical aspects of the model in $\mathbb{Z}^d$ can be found in \cite{Gri99} and \cite{bollobas2006percolation}.

Since its introduction, this model has found different applications besides the physical process of percolation, ranging from network analysis (electric and social networks, internet and the world wide web), disease and rumor propagation, among others.
However, when one focuses on these other applications, the original graph $\mathbb{Z}^d$ may no longer be the most natural setting to define the model.

With the seminal works \cite{BS96} and \cite{LP11}, the study of percolation on more general graphs received a much wider attention.
In particular, the authors of \cite{BS96} layed down several open questions that motivated and guided the continuation of this study.

In this paper we deal with a fundamental question on this subject, concerning the existence of a non-trivial phase transition for site percolation on $G$. Define
\begin{equation}
  \label{e:p_c}
  p_c = \sup \big\{ p \in [0,1]; \mathbb{P}[\text{there exists an infinite open cluster}] = 0 \big\}.
\end{equation}

Quoting Benjamini and Schramm,
\begin{quote}
``The first step in a study of percolation on other graphs $(\cdots)$ will be to prove that the critical probability on these graphs is smaller than one.''
\end{quote}

In \cite{BS96}, the authors predicted that isoperimetric inequalities should play an important role in this task.
We say that a graph $G = (V, \mathcal{E})$ has isoperimetric dimension at least $d$ if
\begin{equation}
  \label{e:standard_iso}
  \begin{array}{c}
    \text{for any finite $A \subseteq V$, we have } |\partial A| \geq c|A|^{\tfrac{d-1}{d}},
  \end{array}
\end{equation}
for some constant $c > 0$ independent of $A$.
An important example of graph satisfying this is the Euclidean lattice $\mathbb{Z}^d$, see Remark~\ref{r:isoperimetric}.
In \cite{BS96}, the authors posed the following
\begin{question}
  \label{q:BS}
  If $G$ has isoperimetric dimension at least $d > 1$, then $p_c(G) < 1$?
\end{question}
Benjamini and Schramm have solved the case $d = \infty$, that is, they have shown that $p_c(G) < 1$ if $|\partial A| \geq c|A|$ holds for all $A \subseteq V$, see Theorem~2 of \cite{BS96}.
Kozma, in \cite{zbMATH05229215}, answered this question affirmatively for planar graphs of polynomial growth.
In Subsection~\ref{ss:previous}, we are going to review this and some other results in this direction.

Here we deal solely with graphs having polynomial growth. Let us introduce the
\begin{definition}
  \label{d:volume_bound}
  We say that a graph $G$ satisfies the \emph{volume upper bound} $\mathcal{V}_u(d_u, c_u)$ if
  \begin{equation}
    \label{e:volume_bound}
    |B(x,r)| \leq c_u r^{d_u}, \text{ for all $x \in V$ and $r \geq 1$}.
  \end{equation}
\end{definition}

Moreover, for our results we require a slightly modified version of the isoperimetric inequality \eqref{e:standard_iso}, resambling the definition that appears in the Appendix of \cite{kanai1985}.
Given sets $A \subseteq B \subseteq V$, we define $\partial_B A$ to be the edge boundary of $A$ when looked as a subset of the graph induced by $B$ in $G$, see the definition in Section~\ref{s:notation}.

\begin{definition}
  \label{d:isoperimetric}
  We say that a graph $G$ satisfies the \emph{local isoperimetric inequality} $\mathcal{L}(d_i, c_i)$ if for every $x \in V$, $r \geq 1$ and $A \subseteq B(x, r) =: B$ such that $|A| \leq |B(x,r)|/2$ we have
  \begin{equation}
    \label{e:isoperimetric}
    |\partial_B A| \geq c_i |A|^{\frac{d_i-1}{d_i}} .
  \end{equation}
\end{definition}

Note the similarity between the above definition and \eqref{e:standard_iso}.
Again, it is easy to see that $\mathbb{Z}^d$ satisfies the above, see Remark~\ref{r:isoperimetric}.
We call the above \emph{local} isoperimetric inequality because we are bounding from below $|\partial_B A|$ instead of $|\partial A|$.
This distinction is important through our arguments and is further discussed in Remark~\ref{r:isoperimetric}.
We can now state
\begin{theorem}
  \label{t:bernoulli}
  If $G$ satisfies the local isoperimetric inequality $\mathcal{L}(d_i,c_i)$, together with the volume bound $\mathcal{V}_u(d_u, c_u)$, for arbitrary $d_i, d_u > 1$, then $p_c < 1$.
  More precisely, there is a $p_* = p_*(d_i, c_i, d_u, c_u, d_l, c_l) < 1$ such that, for every $p > p_*$, one has $\mathbb{P}$-a.s. a unique open infinite connected component.
  Moreover, for every $\chi > 0$ and $p$ large enough,
  \begin{equation}
    \label{e:second_cluster}
    \lim_{V \to \infty} V^\chi \sup_{x \in V} \mathbb{P}[V < |\mathcal{C}_x| < \infty] = 0.
  \end{equation}
  Where $\mathcal{C}_x$ stands for the connected component containing $x$.
\end{theorem}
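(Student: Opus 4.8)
The plan is to run a static multi-scale renormalization. As a preliminary observation, $\mathcal V_u(d_u,c_u)$ applied with $r=1$ gives $\deg(x)+1=|B(x,1)|\le c_u$, so $G$ has bounded degree $\Delta\le c_u$; moreover $\mathcal L(d_i,c_i)$ together with $\mathcal V_u(d_u,c_u)$ yields a matching polynomial lower bound $|B(x,r)|\ge c_l r^{d_l}$ on volumes for suitable $d_l\in(1,d_u]$, $c_l>0$, which is where the parameters $d_l,c_l$ of the statement enter. Fix a rapidly increasing sequence of scales $L_0<L_1<\dots$ and, for each $k$, a maximal $L_k$-separated set $N_k\subseteq V$, with adjacency on $N_k$ given by ``$G$-distance at most a fixed multiple of $L_k$''. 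A coarse-graining argument — using the \emph{local} character of $\mathcal L$ and the two-sided volume bounds — endows each $N_k$ with bounded degree, polynomial two-sided volume bounds, and a (local) isoperimetric inequality of dimension $d_i$.

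The base estimate comes next. Call a ball $B(x,L)$ \emph{well connected} if all open vertices of $B(x,2L)$ lie in a single open cluster of the enlarged ball $B(x,\kappa L)$, $\kappa>2$ fixed. I would prove: for every $\varepsilon>0$ there is $L_0$ so that, for $L\ge L_0$ and $p$ close enough to $1$ (depending on $L$), $\sup_x\mathbb P[\,B(x,L)\text{ not well connected}\,]\le\varepsilon$, indeed $\le\exp(-cL^{\beta})$ for some $\beta>0$. The mechanism: if $B(x,\kappa L)$ has no open cluster of size $\ge\tfrac12|B(x,\kappa L)|$, one extracts a union of open clusters $A$ with $\tfrac14|B(x,\kappa L)|\le|A|<\tfrac12|B(x,\kappa L)|$; its outer vertex boundary inside $B(x,\kappa L)$ consists only of closed sites and, by $\mathcal L(d_i,c_i)$ and $\Delta\le c_u$, has size $\gtrsim|B(x,\kappa L)|^{(d_i-1)/d_i}$, so — the number of closed sites in $B(x,\kappa L)$ being Binomial — this has probability $\le 2^{1-c|B(x,\kappa L)|^{(d_i-1)/d_i}}$. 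On the complement there is a unique giant cluster, and failure of well connectedness then forces some open $u\in B(x,2L)$ into a small open cluster $S\subseteq B(x,\kappa L)$ with closed outer boundary; summing $(1-p)^{(c_i/\Delta)|S|^{(d_i-1)/d_i}}$ over the at most $|B(x,2L)|\,C^{|S|}$ such $S$ yields the bound once $p$ is (exponentially in $L$) close to $1$.

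Then the cascade. Define $G_k$-good boxes inductively: $B(x,L_0)$ is $G_0$-good iff well connected; $B(x,L_{k+1})$ is $G_{k+1}$-good iff the $G_k$-good $L_k$-boxes meeting $B(x,\kappa L_{k+1})$ form one cluster — linked through overlaps of well-connected boxes, along which the giants merge — that captures every $L_k$-box meeting $B(x,2L_{k+1})$ up to bounded net-distance. With this bookkeeping, $G_{k+1}$-good fails only when the non-$G_k$-good $L_k$-boxes form a cut separating the centre of $B(x,L_{k+1})$ from its boundary, and the inherited local isoperimetric inequality on $N_k$ forces any such cut to contain at least $m_k$ boxes with $m_k\to\infty$ as $L_{k+1}/L_k\to\infty$. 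Since non-$G_k$-good events are finite-range dependent on $N_k$, a Liggett--Schonmann--Stacey domination reduces to an independent field, and a union bound over the at most $C^{m_k}$ such cuts gives $\varepsilon_{k+1}\le(C\varepsilon_k)^{m_k}$, where $\varepsilon_k:=\sup_x\mathbb P[B(x,L_k)\text{ not }G_k\text{-good}]$. Choosing the ratios $L_{k+1}/L_k$ large (so $m_k\ge2$, growing) and $L_0$ large — which, by the base estimate, makes $\varepsilon_0$ arbitrarily small once $p$ is close enough to $1$ — drives $\varepsilon_k\to0$ extremely fast, with $\sum_k\varepsilon_k<1$. Now for any fixed $x$, $\{\exists\text{ infinite open cluster}\}\supseteq\bigcap_k\{B(x,L_k)\ G_k\text{-good}\}$: a nested family of good boxes contains one connected set of $G_0$-good boxes reaching every scale, hence an infinite open cluster through the union of their central balls. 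This event has probability $\ge1-\sum_k\varepsilon_k>0$, hence probability $1$ by the $0$--$1$ law, so $p_c\le p_*<1$. Uniqueness for $p>p_*$ is immediate: the merged clusters are unique at every scale, so two infinite open clusters would both have to contain the scale-$k$ giant of a common large box for all large $k$ and therefore coincide — no appeal to Burton--Keane (which would need a transitive or unimodular structure, absent here) is required. Finally, if $|\mathcal C_x|$ is finite but $>V$, then since $|B(x,\diam\mathcal C_x)|\ge|\mathcal C_x|$ the cluster has diameter $\ge(V/c_u)^{1/d_u}$ and is therefore enclosed by a cut of non-$G_j$-good $L_j$-boxes at some scale $L_j\lesssim V^{1/d_u}$; summing over $j$ and over cuts — whose sizes are bounded below, via the inherited isoperimetry, by a growing power of $V^{1/d_u}/L_j$ — bounds $\sup_x\mathbb P[V<|\mathcal C_x|<\infty]$ by a quantity decaying faster than every power of $V$, giving \eqref{e:second_cluster}.

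The main obstacle is that the naive Peierls/contour argument simply does not work here: a finite open cluster of size $k$ is surrounded by only $\asymp k^{(d_i-1)/d_i}$ closed sites — a \emph{sublinear} amount — which cannot beat the $\asymp C^k$ possible clusters through $x$; this is exactly what makes Question~\ref{q:BS} delicate. The multi-scale scheme defeats it because the failure probability is already super-polynomially small \emph{in the scale} at the bottom level, and each step raises it to a power $m_k\ge2$. Consequently the real work is (i) the base estimate with that stretched-exponential rate, and, more seriously, (ii) the uniform coarse-graining of the \emph{local} isoperimetric inequality onto all nets $N_k$ — quantitatively enough that the cut sizes $m_k$ stay $\ge2$ (and grow) for a well-chosen sequence of scales — which is the technical heart of the proof and the part that genuinely uses both the locality of $\mathcal L$ and the two-sided volume bounds.
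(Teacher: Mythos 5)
Your scheme has a genuine gap at its central step, the cascade. You claim that failure of the scale-$(k+1)$ good event forces the bad $L_k$-boxes to form a cut of size $m_k\to\infty$ as $L_{k+1}/L_k\to\infty$. For the event you define (``one cluster of $G_k$-good boxes capturing every box up to bounded net-distance''), this is false: a bounded-size clump of bad boxes suffices to violate it, independently of the ratio of scales. For instance, a thin shell of bad $L_k$-boxes of net-radius slightly larger than your capture distance $D$, placed anywhere in $B(x,\kappa L_{k+1})$, isolates a small pocket of good boxes; the big outside cluster then misses the pocket's centre and the pocket's cluster misses everything far away, so no single cluster captures all boxes --- yet only $O_D(1)$ bad boxes were used. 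The same objection applies to the reading ``bad boxes cut the centre from the boundary'': the component of the centre may be a single box, so the local isoperimetric inequality only bounds the cut by a constant. Hence the recursion $\varepsilon_{k+1}\le(C\varepsilon_k)^{m_k}$ with growing $m_k$ is unjustified (and you also suppress the entropy factor counting the possible positions and shapes of the cut inside $B(x,\kappa L_{k+1})$, which is polynomial in $L_{k+1}/L_k$ per bad box). The missing idea is exactly the design point of the paper's separation event $\mathcal{S}(x,L)$: one must require \emph{both} separated pieces to have diameter comparable to the scale ($\geq L/100$ in Definition~\ref{d:SxL}). Only then does the local isoperimetric inequality, via the Max-flow Min-cut argument of Lemma~\ref{l:disjoint_paths}, force $\sim c_i L_{k+1}^{(d_i-1)/d_i}$ disjoint crossing paths, each of which must trigger a scale-$k$ separation event (Lemma~\ref{l:string}); after discounting that each ball $B(y,4L_k)$ can meet at most $cL_k^{d_u-1}$ of the disjoint paths, one still needs $\gamma\frac{d_i-1}{d_i}>d_u-1$, which is why the paper takes super-geometric scales $L_{k+1}=L_k^{\gamma}$ rather than your large-but-geometric ratios; with geometric ratios the count of well-separated sub-events does not grow when $d_u-1>\frac{d_i-1}{d_i}$, the typical case.

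A second, independent gap is the step you yourself flag as the technical heart: the claim that every net $N_k$ inherits a local isoperimetric inequality of dimension $d_i$ with constants uniform in $k$. Under the weak hypotheses here (no transitivity, and only $\mathcal{V}_u(d_u,c_u)$ together with the lower bound from Lemma~\ref{l:volume_lower}, so ball volumes at scale $L_k$ may fluctuate between $c_lr^{d_l}$ and $c_ur^{d_u}$ with $d_l\neq d_u$), the half-volume condition in Definition~\ref{d:isoperimetric} does not transfer to the coarse-grained balls in any automatic way, and no proof is offered; this is not a routine quasi-isometry argument because the comparison constants are of order $L_k$. The paper avoids ever coarse-graining the isoperimetry: it works directly in $G$ with disjoint paths (Lemma~\ref{l:disjoint_paths}) and only uses the covering set $K$ of Lemma~\ref{l:Kset} for entropy bookkeeping, with a fixed number $J$ of independent sub-events beating the polynomial factor $|K|^J$ under the induction hypothesis $p_k\le L_k^{-\beta}$. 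Your base estimate at scale $L_0$ (with $p$ allowed to approach $1$ as a function of $L_0$) is fine, and your closing remarks on uniqueness and on \eqref{e:second_cluster} would be plausible if the cascade held, but as written both rest on the two unproved claims above.
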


It is interesting to notice that Theorem~\ref{t:bernoulli} also provides the uniqueness of the infinite connected component $\mathcal{C}_\infty$ for large parameters $p$.
In light of the work of Burton and Keane \cite{BK89}, this uniqueness statement may sound redundant, as Theorem~\ref{t:bernoulli} supposes that $G$ has polynomial growth.
Observe however that we are not requiring $G$ to be transitive as it is the case in \cite{BK89}. See also \cite{HJ06} for a discussion on this subject.

A simple modification of our arguments, could improve the polynomial factor $V^\chi$ in \eqref{e:second_cluster} by $V^{\log(V)}$ for instance.
It would be interesting to attempt to extract a better decay for the tail of the finite clusters, see more on this in Remark~\ref{r:questions}.

Other advantages of our techniques is that they do not look much into the details of the system.
In particular, we do not require the graph $G$ to be transitive, or have bounded degree.
Perhaps more surprisingly, our proof can be extended to deal with highly dependent percolation models, as we state in the next section.

\subsection{Statistical mechanics models and dependent percolation}

We now present a consequence of Theorem~\ref{t:bernoulli} for statistical mechanics models on graphs.
We consider the Ising model, the Widom-Rowlinson and the beach model on $G$.
\begin{definition}
  We say that a model undergoes a non-trivial phase transition on $G$ if for some parameter it presents more than one Gibbs measure.
  See Section~2.3 of \cite{MR1765172} for more details.
\end{definition}

Theorem~\ref{t:bernoulli} has the following consequence for such models.
\begin{corollary}
  \label{c:ising}
  If $G$ has bounded degree and satisfies $\mathcal{L}(d_i,c_i)$ and $\mathcal{V}_u(d_u, c_u)$, for arbitrary $d_i, d_u > 1$, then it presents a non-trivial phase transition for the Ising, Widom-Rowlinson and the beach models.
  We can drop the condition of bounded degree for the case of the Ising model.
\end{corollary}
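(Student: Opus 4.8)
The plan is to deduce everything from Theorem~\ref{t:bernoulli} combined with the comparison results of \cite{MR1765172}. By Theorem~\ref{t:bernoulli}, the hypotheses $\mathcal{L}(d_i,c_i)$ and $\mathcal{V}_u(d_u,c_u)$ already give $p_c(G)<1$ for site percolation; together with the classical inequality $p_c^{\mathrm{bond}}(G)\le p_c^{\mathrm{site}}(G)=p_c(G)$, valid on any infinite connected graph, also $p_c^{\mathrm{bond}}(G)<1$. What \cite{MR1765172} supplies is precisely the implication ``$p_c(G)<1$ (plus, for the Widom--Rowlinson and beach models, bounded degree) $\Rightarrow$ the model admits at least two Gibbs measures for some value of its parameter''; I would invoke those theorems directly, after checking that their percolation hypothesis (site or bond criticality strictly below one) is met, which it is by the above. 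The only genuinely new thing to explain is why the bounded-degree assumption can be dropped in the Ising case.

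For that, I would recall the Edwards--Sokal / Fortuin--Kasteleyn picture. On an arbitrary graph, the random-cluster measure $\phi_{p,2}$ with $q=2$ and $p=1-e^{-2\beta}$ stochastically dominates i.i.d.\ bond percolation of density $\tilde p(p)=p/(p+2(1-p))$, and $\tilde p(p)\uparrow 1$ as $\beta\uparrow\infty$. Choosing $\beta$ so large that $\tilde p(p)>p_c^{\mathrm{bond}}(G)$, the wired random-cluster measure percolates, whence the $+$ and $-$ infinite-volume Ising states satisfy $\langle\sigma_0\rangle^+_\beta=\phi^1_{p,2}[0\leftrightarrow\infty]>0$, while $\langle\sigma_0\rangle^-_\beta=-\langle\sigma_0\rangle^+_\beta<0$, so the two Gibbs states are distinct. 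No bound on the degrees enters anywhere: the only input is $p_c^{\mathrm{bond}}(G)<1$, furnished by Theorem~\ref{t:bernoulli}.

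For the Widom--Rowlinson and beach models I would follow \cite{MR1765172} verbatim. There the comparison is a disagreement-percolation type estimate: for an extreme value of the activity, respectively temperature, parameter, the set of vertices whose colour can be influenced by a change of boundary condition is stochastically dominated by a site percolation process whose density tends to $1$ as the parameter degenerates; once that density exceeds $p_c^{\mathrm{site}}(G)<1$, the process percolates, and one then builds two boundary conditions producing different local statistics, hence two Gibbs measures. The density of the dominating process deteriorates with the maximal degree of $G$, which is exactly why the bounded-degree hypothesis is kept for these two models, as stated.

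I do not expect a real obstacle here: the corollary is the combination of Theorem~\ref{t:bernoulli} with already-published input. The one point requiring care is bookkeeping --- verifying that the relevant theorems of \cite{MR1765172} are stated (or can be restated with no change to their proofs) with ``$p_c(G)<1$'' as the sole percolation hypothesis, and, in the Ising case, confirming that the FK stochastic domination and the passage to the infinite-volume limit need no amenability or transitivity of $G$. Both checks are routine, and the corollary follows.
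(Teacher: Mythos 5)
Your proposal is correct and follows essentially the same route as the paper, whose proof is simply the one-line observation that the corollary follows from Theorem~\ref{t:bernoulli} together with Theorems~1.1 and 1.2 of \cite{MR1765172} (with the site-to-bond comparison $p_c^{\mathrm{bond}}\le p_c^{\mathrm{site}}$ noted separately in the text via \cite{zbMATH03172619}). Your additional sketches of the FK domination for Ising and the bounded-degree comparison for Widom--Rowlinson and beach are just an unpacking of those cited results, not a different argument.
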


\begin{proof}
  This result is a direct consequence of Theorem~\ref{t:bernoulli} in light of Theorems~1.1 and 1.2 of \cite{MR1765172}.
\end{proof}

Another important observation for us comes from \cite{zbMATH03172619}, where it is proved that $p_c < 1$ for site percolation implies $p_c < 1$ for bond percolation.
This is the main reason why we focus on the case of site percolation.

Besides the above described statistical mechanics models, we are able to treat dependent percolation processes with polynomial decay of correlations.
This is the content of our next result.

To state it, we need to introduce a way to quantify the dependence of a random environment.
Intuitively speaking, the condition below requires that what happens in two well separated regions of the graph should be approximately independent.

Let $\mathbb{P}$ be any probability measure on $\Omega = \{0,1\}^{V}$, endowed with the $\sigma$-algebra generated by the canonical projections $Y_x:\Omega \to \{0,1\}$, for $x \in V$, given by $Y_x(\omega) = \omega(x)$.

\begin{definition}
  \label{d:decouple}
  We say that $\mathbb{P}$ satisfies the \emph{decoupling inequality} $\mathcal{D}(\alpha, c_\alpha)$ if for any $x \in V$, $r \geq 1$ and any decreasing events $\mathcal{G}$, $\mathcal{G}'$ such that
  \begin{equation}
    \mathcal{G} \in \sigma(Y_z; z \in B(x,r)) \qquad \text{and} \qquad \mathcal{G}' \in \sigma(Y_z; z \not \in B(x,2r)),
  \end{equation}
  we have
  \begin{equation}
    \label{e:decouple}
    \mathbb{P}(\mathcal{G} \cap \mathcal{G}') \leq \mathbb{P}(\mathcal{G}) \mathbb{P}(\mathcal{G}') + c_\alpha r^{-\alpha}.
  \end{equation}
\end{definition}

In analogy with the volume upper bound $\mathcal{V}_u(d_u, c_u)$, we introduce the
\begin{definition}
  \label{d:volume_bound2}
  We say that a graph $G$ satisfies the \emph{volume lower bound} $\mathcal{V}_l(d_l, c_l)$ if
  \begin{equation}
    \label{e:volume_bound2}
    |B(x,r)| \geq c_l r^{d_l}, \text{ for any $x \in V$ and $r \geq 1$.}
  \end{equation}
  Observe that every connected infinite graph satisfies $\mathcal{V}_l(1, 1)$.
\end{definition}

The following theorem is a generalization of Theorem~\ref{t:bernoulli} for dependent $\mathbb{P}$'s.
\begin{theorem}
  \label{t:main}
  Suppose that $G$ satisfies the local isoperimetric inequality $\mathcal{L}(d_i,u_i)$ and the volume bounds $\mathcal{V}_u(d_u, c_u)$ and $\mathcal{V}_l(d_l, c_l)$, for arbitrary $d_i > 1$, $d_u, d_l \geq 1$.
  Assume moreover that the random environment given by $\mathbb{P}$ on $\{0,1\}^V$ satisfies $\mathcal{D}(\alpha, c_\alpha)$ with
  \begin{equation}
    \label{e:alpha_large}
    \alpha > \Big(1 \vee \frac{d_i (d_u - 1)}{d_i - 1} \Big) d_u - d_l.
  \end{equation}
  Then, there is $p_* = p_*(d_i, c_i, d_u, c_u, d_l, c_l, \alpha, c_\alpha) < 1$ such that, if $\, \inf_{x \in V} \mathbb{P}[\text{$x$ is open}] > p_*$, there exists $\mathbb{P}$-a.s. a unique infinite connected component $\mathcal{C}_\infty$.
  Moreover, for $\chi > 0$ and $p$ close enough to one, we have
  \begin{equation}
    \label{e:second_cluster}
    \lim_{V \to \infty} V^\chi \sup_{x \in V} \mathbb{P}[V < |\mathcal{C}_x| < \infty] = 0.
  \end{equation}
  Note that when $d_i = d_u = d_l$, the condition \eqref{e:alpha_large} on $\alpha$ reduces to $\alpha > d_i (d_i - 1)$.
\end{theorem}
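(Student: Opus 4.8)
The plan is to run a multiscale renormalisation. Fix a large integer $\ell$ and scales $L_0$ large, $L_{k+1}=\ell L_k$. At scale $k$ declare a vertex $x$ to be \emph{$k$-good} if $B(x,2L_k)$ carries a ``dominant'' open cluster --- one of volume $\geq\tfrac12|B(x,L_k)|$ that reaches $\partial B(x,2L_k)$ and swallows every comparably large open cluster meeting $B(x,L_k)$ --- and if this is consistent with the corresponding statement for the $(k{-}1)$-scale boxes inside $B(x,2L_k)$. (Being open or closed at $x$ itself does not destroy $k$-goodness, so the defect probability $q_k:=\sup_{x}\mathbb{P}[x\text{ not }k\text{-good}]$ can tend to zero.) First, directly from $\mathcal{D}(\alpha,c_\alpha)$, one checks that $q_0$ is as small as we wish once $L_0$ is large and $p:=\inf_x\mathbb{P}[x\text{ open}]$ is close to $1$.

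Next comes the cascading estimate, the heart of the argument. If $x$ is not $(k{+}1)$-good, then inside $B(x,2L_{k+1})$ there must be a \emph{connected blocking family} $\Sigma$ of $k$-scale boxes whose centres are all not $k$-good and which separates the inner region from $\partial B(x,2L_{k+1})$. Here the local isoperimetric inequality enters: transporting $\mathcal{L}(d_i,c_i)$ to the coarse graph and combining it with $\mathcal{V}_u(d_u,c_u)$ and $\mathcal{V}_l(d_l,c_l)$ at the two adjacent scales forces $|\Sigma|\geq m_k$ with $m_k\asymp\ell^{\,\sigma}$ for an exponent $\sigma=\sigma(d_i,d_u,d_l)>0$; this step also must respect that $\mathcal{L}$ is \emph{local} --- only available for subsets of at most half a ball --- which is what dictates that a cluster of size $\asymp L_j^{d_u}$ be treated at scale $\asymp j$ rather than all at once. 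Since $\mathcal{V}_u$ forces bounded degree, the number of such $\Sigma$ is at most $\mathrm{poly}(\ell)\,C_1^{m_k}$, and for a fixed $\Sigma$ one peels its boxes off a well-separated subfamily one at a time, invoking $\mathcal{D}(\alpha,c_\alpha)$ at each step, to obtain $\mathbb{P}[\Sigma\text{ all bad}]\leq q_k^{\,c\,m_k}+c_\alpha L_k^{-\alpha}$. Altogether
\begin{equation*}
  q_{k+1}\ \leq\ \mathrm{poly}(\ell)\,C_1^{m_k}\big(q_k^{\,c\,m_k}+c_\alpha L_k^{-\alpha}\big).
\end{equation*}
The content of \eqref{e:alpha_large} is precisely what makes this recursion usable: the surface entropy $C_1^{m_k}$ (governed by the isoperimetric dimension $d_i$) and the volume-counting factors (governed by $d_u,d_l$) must be beaten by the decoupling decay $L_k^{-\alpha}$, and under \eqref{e:alpha_large} one can choose $\ell$ and $L_0$ (and $p$ close to $1$) so that this closes, giving $q_k\leq L_k^{-\beta}$ with $\beta$ a constant as large as \eqref{e:alpha_large} permits. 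I expect this step --- closing the cascade while respecting the locality of $\mathcal{L}$ and controlling the surface entropy against an only polynomially small decoupling error --- to be the main obstacle; note that $\mathcal{L}$ alone yields a merely \emph{sublinear} boundary $|\mathcal{C}_x|^{(d_i-1)/d_i}$, useless for a one-scale Peierls bound, and the whole apparatus exists to amplify it at controlled cost. (As a sanity check, when $d_i=d_u=d_l=d$ the constraint collapses to $\alpha>d(d-1)$.)

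Finally, the two conclusions follow from $q_k\leq L_k^{-\beta}$. For the tail of finite clusters: a cluster $\mathcal{C}_x$ with $V<|\mathcal{C}_x|<\infty$ must be separated from the dominant cluster of every surrounding ball, which upon unfolding through the scales traps $x$ in a \emph{nested} family of blocking configurations at all scales $j\leq k(V)\asymp\log_\ell(\diam\mathcal{C}_x)$; estimating this multiscale event scale by scale with $\mathcal{D}(\alpha,c_\alpha)$ multiplies the per-scale contributions, and the product telescopes to a quantity of order $V^{-c\log_\ell V}$, which is $o(V^{-\chi})$ for every $\chi>0$ --- this is \eqref{e:second_cluster}, and it also accounts for the ``$V^{\log V}$'' improvement mentioned in Remark~\ref{r:questions}. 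For existence and uniqueness of $\mathcal{C}_\infty$: for $p>p_*$ the $k$-good vertices at a fixed large scale have a super-polynomially sparse complement enjoying the decoupling property, so a standard high-density argument produces a unique infinite cluster of good boxes, and any open cluster of size $>V$ connects to it with probability $1-o(1)$ as $V\to\infty$; letting $V\to\infty$ and applying Borel--Cantelli over the scales gives a $\mathbb{P}$-a.s.\ unique infinite component $\mathcal{C}_\infty$. This genuinely uses the quantitative bound, since $G$ need not be transitive and Burton--Keane does not apply.
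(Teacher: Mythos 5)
Your scheme has two genuine gaps, and they sit exactly at the step you yourself flag as the main obstacle. First, the cascading estimate. With fixed-ratio scales $L_{k+1}=\ell L_k$, what the hypotheses actually give, via Lemma~\ref{l:disjoint_paths} (max-flow min-cut applied to $\mathcal{L}(d_i,c_i)$), is a family of disjoint paths of cardinality of order $L_{k+1}^{(d_i-1)/d_i}$ in the \emph{original} graph; converting this into a count of bad $L_k$-boxes costs a factor of order $L_k^{d_u-1}$ per box (each box can absorb that many disjoint paths), so the number of well-separated bad boxes forced at scale $k$ is only of order $\ell^{(d_i-1)/d_i}L_k^{(d_i-1)/d_i-(d_u-1)}$, which tends to $0$ as $k\to\infty$ whenever $d_u-1>(d_i-1)/d_i$ (already for $\mathbb{Z}^d$, $d\geq 2$). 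Your claim $m_k\asymp\ell^{\sigma}$ uniformly in $k$ amounts to ``transporting $\mathcal{L}(d_i,c_i)$ to the coarse graph with constants uniform in the scale'', which is not a consequence of $\mathcal{L}$, $\mathcal{V}_u$, $\mathcal{V}_l$; likewise, the entropy bound $C_1^{m_k}$ presupposes that the blocking family of bad boxes can be taken \emph{connected}, i.e.\ a minimal cut-set property in the spirit of \cite{MR2054174}, which is an additional assumption on general graphs and is deliberately avoided here. The paper closes the recursion differently: the scales are super-geometric, $L_{k+1}=L_k^{\gamma}$ with $\gamma>1\vee d_i(d_u-1)/(d_i-1)$, so that Lemma~\ref{l:cascade} forces $N\asymp L_k^{\gamma(d_i-1)/d_i-(d_u-1)}\to\infty$ separated separation events; one then fixes a large integer $J$, uses the covering-set entropy $|K|^J$ (no connectedness needed) and Lemma~\ref{l:several_boxes} to get $p_{k+1}\leq(\useconst{c:Kset}L_k^{\gamma d_u-d})^J(p_k+c_\alpha L_k^{-\alpha})^J$, and the condition \eqref{e:alpha_large} is precisely what allows $\alpha>\gamma d_u-d$ for such a $\gamma$ and some $d<d_l$. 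In your sketch \eqref{e:alpha_large} is invoked as ``what makes the recursion close'' but is never actually derived from the counting, and with your geometric scales it could not be, since the needed exponent $\sigma>0$ does not exist.

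Second, your good/bad events are incompatible with the decoupling hypothesis. Definition~\ref{d:decouple} applies only to \emph{decreasing} events; ``$x$ is not $k$-good'' in your sense (absence of a dominant cluster that moreover swallows all comparably large clusters) is neither increasing nor decreasing, so the peeling step $\mathbb{P}[\Sigma\text{ all bad}]\leq(q_k+c_\alpha L_k^{-\alpha})^{cm_k}$ has no justification under $\mathcal{D}(\alpha,c_\alpha)$. This is why the paper works with the separation events of Definition~\ref{d:SxL}, which are decreasing by construction (closing vertices only helps separation), and why the final step (Lemma~\ref{l:lego}) deduces uniqueness and the tail bound \eqref{e:second_cluster} from the decay $p_k\leq L_k^{-\beta}$ by a union bound along a half-axis path rather than by the cross-scale product you describe (which would again require decoupling non-decreasing, non-nested events). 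Your last paragraph is fine in spirit once a decay of the form $q_k\leq L_k^{-\beta}$ is available, but as it stands the core recursion is not established.
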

If $G$ has bounded degree, we can use Lemma~\ref{l:volume_lower} to replace $d_l$ in \eqref{e:alpha_large} by $d_i$.

\begin{remark}
  Let us observe that the constant $p_*$ in Theorem~\ref{t:main} only depends on the graph $G$ and the law $\mathbb{P}$ through the constants $d_i, c_i, d_u, c_u, d_l, c_l, \alpha$ and $c_\alpha$.
  Note the similarity between this fact and the result in Theorem~1.2 of \cite{BLPS97}, dealing with non-amenable graphs.

  In Theorem~1.1 of \cite{BLPS97}, the authors prove that, for amenable graphs, one cannot drop the dependence of $p_*$ on $\alpha$ and $c_\alpha$, otherwise $p_*$ would not be uniform over the measures $\mathbb{P}$.
  An interesting example of percolation process with polynomial decay of correlations that presents no phase transition is given in \cite{TW10b}, Proposition~5.6.
\end{remark}

\subsection{Previously known results}
\label{ss:previous}

Besides the lattice $\mathbb{Z}^d$, other important examples of graphs received special attention in the literature, such as regular trees, the complete graph, fractal-type graphs, hyper-cubes and others.
Due to their symmetry, some of these examples proved to be simpler to analyze than the original setting $\mathbb{Z}^d$.

There is also a rich literature that studies percolation on graphs under various conditions.
We now mention some works, where the question of whether $p_c(G) < 1$ has been attacked.

If a graph has positive Cheeger's constant, the fact that $p_c < 1$ has been established in Theorem~2 of \cite{BS96}.
The case of Cayley graphs with exponential growth has been investigated in \cite{zbMATH01462554} and \cite{BLPS97}, see also \cite{LP11}.
Cayley graphs of finitely presented groups with one end, have been covered in Corollary~10 of \cite{zbMATH01224777}.
In \cite{MR2054174}, the authors prove that $p_c < 1$ under several conditions, the main one is called the \emph{minimal cut-set property}.
Question~\ref{q:BS} has been answered positively for the case of planar graphs of polynomial growth in \cite{zbMATH05229215}.

In the aforementioned works, several ideas and techniques have been used, such as mass transport principles, analytical tools, exploration algorithms and energy vs entropy estimations.
Roughly speaking, the approach we devise here follows an energy vs entropy strategy, but understood from a renormalization perspective, that we now briefly describe.

\subsection{Idea of the proof}

Suppose we are able to find a combinatorial structure that is necessary to prevent percolation, one could think for instance on the existence of a dual circuit surrounding the origin in the case of $\mathbb{Z}^2$.
Then, the task of showing the existence of an infinite cluster is reduced to ruling out the existence of such blocking structures.
The energy vs entropy approach consists in showing that, for $p$ close enough to one, the cost of observing a given blocking structure overwhelms their combinatorial richness.

Here we employ a similar technique, where our blocking structures are induced by the so called separation events $\mathcal{S}(x,L)$, introduced in Definition~\ref{d:SxL}.
This definition finds some inspiration in \cite{T10} and have two important features that are well suited to this work.

First of all, they are hierarchical in nature, as proved in Lemma~\ref{l:cascade}, allowing us to employ a renormalization procedure to bound their probabilities.
The second important property of the separation events is that we can choose the precise way in which they interact between scales, see Lemma~\ref{l:cascade}.
Therefore, we can adapt our arguments to the specific isoperimetric profile of $G$.
This way, our results apply for any $d_i > 1$ as stated above.

The flexibility of our techniques in dealing with dependent environments contrasts with other methods that look closely into the microscopic shape of the blocking structures, such as Peierls argument, see for instance the proof of Theorem~1.10 in \cite{Gri99} p.15.

This paper is organized as follows.
In Section~\ref{s:notation} we introduce some notation and simple graph theoretical results needed throughout the text.
The notion of separation events and the renormalization scheme that is used throughout our proofs are presented in Section~\ref{s:separation}.
In Section~\ref{s:proofs} we provide the proofs of Theorems~\ref{t:bernoulli} and \ref{t:main}.
We leave some questions and remarks in Section~\ref{s:questions}.

{\bf Acknowledgments - } We would like to thank Itai~Benjamini for the interesting remarks and for letting me know of the reference \cite{kanai1985} appearing in Remark~\ref{r:isoperimetric}.
We are also grateful to Elisabetta Candellero for her comments and reading.
This research has been supported by CNPq grants 306348/2012-8 and 478577/2012-5.

\section{Notation and preliminary results}
\label{s:notation}

In this section we establish some notation needed in the course of the article, as well as some results on graph theory.
Although some of these preliminary results are reasonably simple, we provide their proof for the sake of completeness.

Let us first comment on our use of constants.
We use $c$ for a positive and finite constant that may change from line to line.
Should a constant depend on further parameters, such as $d_i, c_i, \dots$ this dependence will be indicated like in $c(d_i, c_i)$.
More important constants are numbered as $c_0, c_1, \dots$ and refer to their first appearance in the text.

Throughout this article, we will write $G = (V,\mathcal{E})$ for an infinite connected graph with finite geometry, that is, we assume that every vertex has only a finite number of neighbors.

Given a set $A \subseteq V$, we denote its boundary by $\partial A = \big\{\{x,y\} \in \mathcal{E}; x \in A \text{ and } y \not \in A\big\}$.
For sets $A \subseteq B \subseteq V$, we introduce the edge boundary of $A$ relative to $B$ through the following $\partial_B A = \big\{ \{x,y\} \in \mathcal{E}; x \in A, y \in B \setminus A \big\}$.

We call $\sigma:\{0,1,\dots,l\} \to V$ a path if $\{\sigma_{i-1}, \sigma_i\} \in \mathcal{E}$ for every $i = 1,\dots,l$.
The integer $l$ above is called the length of $\sigma$.
Such a path is said to be open if $Y_{\sigma_i} = 1$ for every $i = 0, \dots, l$.

Given $x, x' \in V$, we write $d(x,x')$ for the smaller length among all paths starting at $x$ and ending at $x'$.
The distance between two sets $d(A,A')$ is given by the minimum distance between points $x \in A$ and $x' \in A'$ and analogously for $d(A,x)$.
For $x \in V$ and $r \in \mathbb{R}_+$ we define $B(x,r) = \{y \in V; d(x,y) \leq r\}$ and if $K \subseteq V$, we denote the $r$-neighborhood of $K$ by $B(K,r) = \{y \in V; d(K,y) \leq r\}$.

A map $\omega \in \Omega := \{0,1\}^V$ is called a site percolation configuration and we endow the set $\Omega$ with the  $\sigma$-algebra generated by the canonical projections $(Y_x)_{x \in V}$ and a probability measure $\mathbb{P}$.
In Theorem~\ref{t:bernoulli}, $\mathbb{P}$ is taken to be the product measure, under which the variables $(Y_x)_{x \in V}$ are independent with $\mathbb{P}[Y_x = 1] = p \in [0,1]$.
Given a configuration $\omega \in \Omega$ and $x \in V$, we define $\mathcal{C}_x$ to be the open connected component containing $x$.

\begin{definition}
  \label{e:connects}
  Given sets $A, A' \subseteq V$, we say that a path $\sigma = (x_0, \dots, x_l)$ connects $A$ and $A'$ if $x_0 \in B(A,1)$ and $x_l \in B(A', 1)$.
  Note that the point $x_0$ need not be in the set $A$ itself, as it could be solely a neighbor of $A$ (analogously, $x_l$ need not be in $A'$).
\end{definition}

The next lemma shows that under the condition $\mathcal{L}(d_i, c_i)$, any two sets can be joined by a reasonable number of disjoint paths.
Its proof will be a direct consequence of the Max-flow Min-cut Theorem.

\begin{lemma}
  \label{l:disjoint_paths}
  Suppose that $G$ satisfies $\mathcal{L}(d_i, c_i)$ and take disjoint sets $A, A' \subseteq B(x, r)$, where $x \in V$ and $r \geq 1$.
  Then there exist at least $\big\lceil c_i (|A| \wedge |A'|)^{\frac{d_i-1}{d_i}} \big\rceil$ disjoint paths contained in $B(x, r)$, connecting $A$ to $A'$.
\end{lemma}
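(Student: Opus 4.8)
The plan is to invoke the Max-flow Min-cut Theorem on an auxiliary network built from the induced subgraph on $B := B(x,r)$. First I would set up the network: take the graph induced by $B$, add a super-source $s$ joined to every vertex of $B(A,1)\cap B$ and a super-sink $t$ joined to every vertex of $B(A',1)\cap B$, and give every original edge of $B$ unit capacity while giving the new source/sink edges infinite (or sufficiently large) capacity. An integral $s$--$t$ flow of value $k$ in a unit-capacity network decomposes into $k$ edge-disjoint $s$--$t$ paths; removing $s,t$ and trimming to the first/last vertex in $B(A,1)$, $B(A',1)$ yields $k$ edge-disjoint paths in $B$ that connect $A$ to $A'$ in the sense of Definition~\ref{e:connects}. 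Since we actually want vertex-disjoint (or at least "disjoint" in the paper's sense) paths, I would instead use the standard vertex-splitting gadget — replace each vertex $v\in B$ by an in-copy $v^-$ and out-copy $v^+$ joined by a unit-capacity edge — so that a flow of value $k$ gives $k$ internally vertex-disjoint paths; the endpoints in $B(A,1)$ and $B(A',1)$ can be allowed to be shared or handled by the source/sink construction as needed. Max-flow Min-cut then says the maximal number of such disjoint paths equals the minimal capacity of an $s$--$t$ cut.

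The core of the argument is to bound the min-cut from below using the local isoperimetric inequality $\mathcal{L}(d_i,c_i)$. A finite-capacity $s$--$t$ cut corresponds to a set $S\subseteq B$ (the vertices on the source side, in the vertex-split picture) with $B(A,1)\cap B\subseteq S$ and $(B(A',1)\cap B)\cap S=\emptyset$; its capacity is $|\partial_B S|$ (up to the vertex-split bookkeeping, which only helps). Now $A\subseteq S$ and $A'\subseteq B\setminus S$, so both $|S|\geq |A|$ and $|B\setminus S|\geq |A'|$, hence $|A|\wedge|A'| \leq |S|\wedge|B\setminus S| = |S|\wedge(|B|-|S|)$. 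If $|S|\leq |B|/2$ then $\mathcal{L}(d_i,c_i)$ applied to $S\subseteq B$ gives $|\partial_B S|\geq c_i|S|^{(d_i-1)/d_i}\geq c_i(|A|\wedge|A'|)^{(d_i-1)/d_i}$; if $|S|>|B|/2$, apply the inequality to the complement $B\setminus S$ (which satisfies $|B\setminus S|\leq|B|/2$ and has $\partial_B(B\setminus S)=\partial_B S$) and conclude the same bound. Since every finite cut has capacity at least $c_i(|A|\wedge|A'|)^{(d_i-1)/d_i}$, the min-cut — hence the max-flow, hence the number of disjoint paths — is at least $\big\lceil c_i(|A|\wedge|A'|)^{(d_i-1)/d_i}\big\rceil$, the ceiling being automatic since the number of paths is an integer.

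The main obstacle I anticipate is purely bookkeeping: reconciling the paper's notion of "disjoint paths connecting $A$ to $A'$" (where endpoints may lie in $B(A,1)$ rather than $A$, and where "disjoint" should be made precise as vertex- or edge-disjoint) with the clean statements of Max-flow Min-cut and the integral flow decomposition. One must be careful that the cut set $S$ used in the isoperimetric step is genuinely a subset of $B$ with $A$ on one side and $A'$ on the other — this is what forces $|S|\wedge(|B|-|S|)\geq|A|\wedge|A'|$ — and that the vertex-splitting does not create spurious cuts of smaller capacity (it does not: a min-cut in the split network either cuts an original edge or a vertex-edge, and in the latter case the corresponding vertex can be moved to one side without increasing capacity, reducing to an edge cut $\partial_B S$). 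Once the network is set up correctly, the isoperimetric estimate plugs in directly and there is no further analytic difficulty.
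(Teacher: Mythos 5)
Your core argument is the one the paper uses: lower-bound every cut between $A$ and $A'$ inside $B=B(x,r)$ by applying $\mathcal{L}(d_i,c_i)$ to the source-side set (or to its complement, using $\partial_B(B\setminus S)=\partial_B S$ and $|S|\wedge|B\setminus S|\geq |A|\wedge|A'|$), then invoke Max-flow Min-cut to turn the cut bound into disjoint paths. The paper does exactly this, with your source-side set $S$ replaced by the set $D$ of vertices reachable from $A$ in $B$ without using edges of a minimal cut-set $\mathcal{C}\subseteq\mathcal{E}_B$; in this edge-cut form your proof is complete and essentially identical to the paper's, which indeed only works with edge cut-sets and hence produces edge-disjoint paths.

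The one step that does not hold up is the vertex-splitting aside. The claim that a minimum cut in the split network ``either cuts an original edge or a vertex-edge, and in the latter case the corresponding vertex can be moved to one side without increasing capacity'' is false in general: cutting the internal edge $v^-v^+$ costs $1$, whereas moving $v$ to one side forces cutting all original edges from $v$ to the other side, which costs on the order of the degree of $v$ -- and the paper explicitly does not assume bounded degree. Equivalently, vertex-disjoint paths are governed by minimum vertex cuts, which can be much smaller than minimum edge cuts, and $\mathcal{L}(d_i,c_i)$ bounds only edge boundaries, so the vertex-disjoint strengthening does not follow from the isoperimetric hypothesis by this route. Since the lemma as proved in the paper is the edge-disjoint statement that your unit-capacity flow argument already delivers, you should simply drop the splitting gadget (or add a bounded-degree assumption if you really want internally vertex-disjoint paths).
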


Consider the graph $G_B = (V_B, \mathcal{E}_B)$ induced by the ball $B = B(x,r)$ in $G$.
We say that $\mathcal{C} \subseteq \mathcal{E}_B$ is a cut-set between $A$ and $A'$ (subsets of $B$) if there is no path in $G_B$ connecting $A$ to $A'$ and avoiding all the edges in $\mathcal{C}$.
We say that such $\mathcal{C}$ is minimal if it has minimal cardinality among all the possible cut-sets between $A$ and $A'$.

\begin{proof}
  Throughout this proof we are going to restrict ourselves to the sub-graph induced by $B(x, r)$ in $G$.
  Let $\mathcal{C} \subseteq \mathcal{E}_B$ be a minimal cut-set between $A$ and $A'$, in this induced sub-graph.
  Define $D$ to be the set of points $y \in B(x, r)$ such that $A$ can be joined to $y$ by a path in $B(x, r)$, without using any edge in $\mathcal{C}$.
  Analogously we define $D'$ replacing the role of $A$ by $A'$.

  Clearly $D \cap D' = \varnothing$ since otherwise one would be able to connect $A$ to $A'$ in $B(x, r)$ without using any edge in $\mathcal{C}$.
  Thus, either $D$ or $D'$ has volume smaller or equal to $|B(x, r)|/2$ and without loss of generality we assume it to be $D$.
  Applying the property $\mathcal{L}(d_i, c_i)$ for the set $D$, we obtain that
  \begin{equation}
    |\mathcal{C}| \geq |\partial_B D| \geq c_i |D|^{\frac{d_i-1}{d_i}} \geq c_i (|A| \wedge |A'|)^{\frac{d_i-1}{d_i}}.
  \end{equation}
  Applying the Max-flow Min-cut Theorem, we conclude the proof of the lemma.
\end{proof}

Let us now comment on our specific choice of isoperimetric inequalities in Definition~\ref{d:isoperimetric}.
In particular, let us contrast it with the more traditional definition \eqref{e:standard_iso} which is clearly weaker than Definition~\ref{d:isoperimetric}.

\begin{remark}
  \label{r:isoperimetric}
  $(a)$ It is important to mention that in the proof of Theorems~\ref{t:bernoulli} and \ref{t:main} we don't use the condition $\mathcal{L}(d_i, c_i)$ directly.
  In fact we only use the existence of several disjoint paths between $A$ and $A'$ as stated in Lemma~\ref{l:disjoint_paths}.

  \vspace{4mm}
  $(b)$ We have already noticed that $\mathbb{Z}^d$ satisfies \eqref{e:standard_iso}, as it follows for instance from Theorem~6.31 of \cite{LP11}, p. 210.
  We now show that $\mathbb{Z}^d$ also satisfies the local isoperimetric inequality $\mathcal{L}(d, c)$.
  First we apply Theorem~5.2 of \cite{MR1318794} to conclude that if $K \subseteq \mathbb{R}^d$ is convex and bounded, then
  \begin{equation}
    \Vol_{d-1} (\partial S \cap K) \geq c \frac{\Vol_d(S)}{\diam(K)},
  \end{equation}
  for every open set $S \subseteq K$ with smooth boundary and $\Vol(S) \leq (3/4)\Vol(K)$.
  If we choose $S$ to be the union of cubes of side length $1$, centered in points of $A \subseteq K \cap \mathbb{Z}^d$, we obtain that $|\partial_K A| \geq c |A|/\diam(K)$.
  Here we should take care to guarantee that all the cubes composing $S$ have a positive proportion of their volume inside $K$.
  As well as the faces of cubes in $S$ corresponding to edges in $\partial_K A$.

  To finish, we partition the ball $\bar K = \{(x_1, \dots, x_d) \in \mathbb{R}^d; \sum_i |x_i| \leq n\}$ into convex sets $K_i$ such that $\diam(K_i) \leq (100 \Vol_{d}(S))^{1/d}$ and $\Vol_d(K_i) \geq 10 \Vol_d(S)$, obtaining $|\partial_{\bar K} A| \geq c \Vol_{d-1} (\partial S \cap \bar K) \geq$ $\sum_i c \Vol_{d-1}(\partial S \cap K_i) \geq$ $c \sum_i \Vol_d(S \cap K_i) \Vol_d(S)^{-1/d} \geq$ $c \Vol_d(S \cap K)^{(d-1)/d} \geq$ $c |A|^{(d-1)/d}$.
  This finishes the proof that $\mathbb{Z}^d$ satisfies $\mathcal{L}(d, c)$.

  We also point out that any graph that is quasi-isometric to $\mathbb{Z}^d$ satisfies $\mathcal{L}(d,c)$.
  This is a consequence of Lemma~4.5 and the Appendix of \cite{kanai1985}.

  \vspace{4mm}
  $(c)$ We now give two examples that satisfy the standard but not the local isoperimetric inequality.
  The first example consists in the infinite regular tree, which clearly satisfies \eqref{e:standard_iso}, for any dimension.
  On the other hand, given any two connected subsets $A$ and $A'$ of the infinite regular tree, there cannot exist two or more disjoint paths connecting $A$ and $A'$.
  Therefore the conclusion of Lemma~\ref{l:disjoint_paths} does not hold, so that infinite regular trees do not satisfy the local isoperimetric inequality of Definition~\ref{d:isoperimetric}.

  On the other hand we know that there exists a phase transition for percolation on regular trees, using for instance a Galton-Watson type argument.
  An indirect way to see that regular trees do not satisfy the local isoperimetric inequality is to observe that the uniqueness of the infinite cluster derived in Theorem~\ref{t:bernoulli} is not satisfied for regular trees, no matter the parameter $p \in (p_c, 1)$.

  Our second example has polynomial growth and is given by two copies of $\mathbb{Z}^d$ connected by a single edge.
  More precisely let $G = (V, \mathcal{E})$, with $V = V_1 \cup V_2$, where $V_1, V_2$ are two disjoint copies of $\mathbb{Z}^d$.
  The edges $\mathcal{E}$ are given by $\mathcal{E}_1 \cup \mathcal{E}_2 \cup \{e\}$, where $\mathcal{E}_i$ connects nearest neighbors vertices in $V_i$, $i = 1, 2$, and $e$ links the origins of $V_1$ and $V_2$.
  The graph $G$ clearly satisfies \eqref{e:standard_iso} with dimension $d$.
  To see this, note that any set $A \subset V_1 \cup V_2$ has at least half of its edges in either $V_1$ or $V_2$, then observe that $\mathbb{Z}^d$ satisfies \eqref{e:standard_iso} with dimension $d$.
  On the other hand, observe that $G$ does not satisfy \eqref{e:isoperimetric} as one can see by taking $x$ to be the origin in $V_1$ and $A = V_1 \cap B(x, r)$ (which gives $|\partial_B A| = 1$ for every $r$).

  We believe it would be an interesting problem to investigate further the relation between these two isoperimetric inequalities, see also Remark~\ref{r:questions}.
\end{remark}

The next result shows that the lower bound $\mathcal{V}_l(d_l, c_l)$ on the volume of balls in $G$ can be obtained from the local isoperimetric inequality, given that $G$ has bounded degree.

\newconst{c:lower_growth}
\begin{lemma}
  \label{l:volume_lower}
  If $G = (V, \mathcal{E})$ is an infinite graph satisfying $\mathcal{L}(d_i, c_i)$ (with $d_i > 1$) and every vertex in $V$ has degree at most $\Delta$, then $G$ also satisfies $\mathcal{V}_l(d_i, \useconst{c:lower_growth})$ for some constant $\useconst{c:lower_growth} = \useconst{c:lower_growth}(d_i, c_i, \Delta)$.
\end{lemma}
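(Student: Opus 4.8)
The plan is to reduce the statement to a recursive inequality for the ball-volume function. Fix $x \in V$ and set $f(r) = |B(x,r)|$ for $r \geq 0$, so that $f$ is nondecreasing with $f(0)=1$, and $f(r)\to\infty$ as $r\to\infty$ since $G$ is infinite and connected. The key step will be to establish that
\[
  f(r+1)-f(r)\;\geq\;\frac{c_i}{\Delta}\,f(r)^{\frac{d_i-1}{d_i}}\qquad\text{for every }r\geq 0,
\]
and then to deduce from this together with $f(0)=1$ that $f(r)\geq\useconst{c:lower_growth}\,r^{d_i}$ for all $r\geq 1$, with $\useconst{c:lower_growth}$ depending only on $d_i$, $c_i$ and $\Delta$.

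To prove the displayed recursion, fix $r\geq 0$, put $A=B(x,r)$ and $S_{r+1}=B(x,r+1)\setminus B(x,r)$, and choose the smallest integer $s$ with $f(s)\geq 2f(r)$, which exists because $f(r)\to\infty$; since $f(s)\leq f(r)<2f(r)$ whenever $s\leq r$, this forces $s\geq r+1$. Now apply the local isoperimetric inequality $\mathcal{L}(d_i,c_i)$ with the ball $B:=B(x,s)$ and the subset $A\subseteq B$: this is legitimate because $|A|=f(r)\leq f(s)/2=|B|/2$, and it gives $|\partial_B A|\geq c_i\,f(r)^{(d_i-1)/d_i}$. For the matching upper bound, observe that any edge of $\partial_B A$ has its endpoint outside $A$ at distance exactly $r+1$ from $x$ — it lies outside $B(x,r)$ yet is adjacent to a vertex of $B(x,r)$ — hence in $S_{r+1}\subseteq B$; since every vertex has degree at most $\Delta$, each vertex of $S_{r+1}$ carries at most $\Delta$ such edges, so $|\partial_B A|\leq\Delta\,|S_{r+1}|=\Delta\,(f(r+1)-f(r))$. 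Combining the two bounds yields the recursion. The only place requiring a little care here is the choice of $B$: it must be taken large enough that $B(x,r)$ occupies at most half of it so that $\mathcal{L}(d_i,c_i)$ applies, while noting that enlarging $B$ beyond $B(x,r+1)$ does not change which vertices may carry the outer endpoints of $\partial_B A$.

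Finally, to pass from the recursion to the polynomial lower bound, I would write $g(r)=f(r)^{1/d_i}$ and $\kappa=c_i/\Delta$, and prove that $g(r)\geq g(r-1)+c''$ for all $r\geq 1$, where $c''=\min\{1,\ \kappa/(d_i\,2^{d_i-1})\}>0$. Indeed, if this failed for some $r$, then, using $g(r-1)\geq f(0)^{1/d_i}=1$ and the elementary inequality
\[
  (a+c'')^{d_i}-a^{d_i}=\int_a^{a+c''}d_i\,t^{d_i-1}\,\d t\;\leq\;d_i\,c''\,(2a)^{d_i-1}\;\leq\;\kappa\,a^{d_i-1}\qquad(a\geq 1),
\]
which holds since $t^{d_i-1}$ is increasing and $c''\leq 1\leq a$, we would obtain $f(r)=g(r)^{d_i}<(g(r-1)+c'')^{d_i}\leq f(r-1)+\kappa\,f(r-1)^{(d_i-1)/d_i}$, contradicting the recursion at index $r-1$. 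Telescoping $g(r)\geq g(r-1)+c''$ from $g(0)=1$ gives $f(r)^{1/d_i}=g(r)\geq c''\,r$, i.e. $f(r)\geq(c'')^{d_i}r^{d_i}$ for every $r\geq 1$, so one may take $\useconst{c:lower_growth}=(c'')^{d_i}$, which depends only on $d_i$, $c_i$, $\Delta$. The main obstacle lies in the first step — selecting the right ball $B$ on which to invoke $\mathcal{L}(d_i,c_i)$ and controlling its relative boundary through the degree bound; once the recursion is in place, the remainder is a routine analysis of a discrete differential inequality.
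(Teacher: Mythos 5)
Your proof is correct and follows essentially the same route as the paper's: both combine the degree bound with the local isoperimetric inequality applied to the ball $B(x,r)$ itself to obtain the discrete differential inequality $f(r+1)-f(r)\geq (c_i/\Delta)\,f(r)^{(d_i-1)/d_i}$, and then solve it (the paper by direct induction on $f(j)\geq (c\,j)^{d_i}$, you via fixed additive increments of $f^{1/d_i}$ — a cosmetic difference). A nice touch is your explicit choice of the enclosing ball $B(x,s)$ with $f(s)\geq 2f(r)$, which the paper leaves implicit when it invokes $\mathcal{L}(d_i,c_i)$ to bound $|\partial B(x,j)|$.
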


See also Lemma~(4.13) of \cite{W00}, p. 45.

\begin{proof}
  Denoting by $S(x, r)$ the set $\{y \in V; d(y,x) = r\}$, we get
  \begin{equation}
    \label{e:lower_bound_B}
    |B(x,r)| \geq \sum_{j=1}^{\lfloor r \rfloor} |S(x,j)| \geq \sum_{j=0}^{\lfloor r \rfloor - 1} \frac{| \partial B(x,j) |}{\Delta} \geq \sum_{j=0}^{\lfloor r \rfloor - 1} \frac{c_i | B(x,j) |^\frac{d_i - 1}{d_i}}{\Delta}.
  \end{equation}
  We want to show by induction that $|B(x,j)|$ has volume at least of order $j^{d_i}$. Choose
  \begin{equation}
    \useconst{c:lower_growth} = 1 \wedge \frac{c_i}{2^{d_i} d_i \Delta}
  \end{equation}
  and observe that $|B(x,1)| \geq 1 \geq (\useconst{c:lower_growth} \cdot 1)^{d_i}$.
  We now suppose that for some $j' \geq 2$ we have $|B(x,j)| \geq (\useconst{c:lower_growth} j)^{d_i}$, for every $j < j'$ and estimate, using \eqref{e:lower_bound_B},
  \begin{equation}
    \begin{split}
      |B(x,j')| & \geq \sum_{j=0}^{j'-1} \frac{c_i | B(x,j) |^\frac{d_i - 1}{d_i}}{\Delta} \geq c_i \sum_{j=0}^{j'-1} \frac {(\useconst{c:lower_growth} j)^{d_i - 1}}{\Delta} \geq \frac{c_i \useconst{c:lower_growth}^{d_i-1}}{\Delta} \sum_{j=0}^{j'-1} j^{d_i-1}\\
      &  \geq \frac{c_i \useconst{c:lower_growth}^{d_i-1}}{\Delta} \frac{(j' - 1)^{d_i}}{d_i} \geq (2 \useconst{c:lower_growth}(j'-1))^{d_i} \overset{j' \geq 2}\geq (\useconst{c:lower_growth}j')^{d_i}.
    \end{split}
  \end{equation}
  Finishing the proof of the lemma by induction on $j'$.
\end{proof}

The next lemma helps us cover a ball of $G$ with not too many balls of smaller radius.
This is crucial in \eqref{e:inductive_bernoulli} to bound the number of ways in which the separation events can propagate to smaller scales.

\newconst{c:Kset}
\begin{lemma}
  \label{l:Kset}
  Suppose that $G = (V, \mathcal{E})$ satisfies $\mathcal{V}_u(d_u, c_u)$ and $\mathcal{V}_l(d_l, c_l)$.
  Then, for any $d < d_l$, there exists a constant $\useconst{c:Kset} = \useconst{c:Kset}(d_l, c_l, d_u, c_u, d)$ such that
  \begin{display}
    \label{e:Kset}
    for every $x \in V$, $r \geq 1$ and $s \in [(\log r)^{\tfrac{2}{d_l - d}}, r/6]$, there exists $K \subseteq B(x, \tfrac{5r}{6})$\\
    such that $|K| \leq \useconst{c:Kset} \frac{r^{d_u}}{s^d}$ and $B(K, \tfrac{s}{6})$ covers $B(x, \tfrac{4r}{6})$.
  \end{display}
\end{lemma}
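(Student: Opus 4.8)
\textbf{Proof plan for Lemma~\ref{l:Kset}.}
The plan is to build $K$ greedily as a maximal $(s/3)$-separated subset of $B(x,\tfrac{4r}{6})$; that is, I choose points $y_1, y_2, \dots$ in $B(x,\tfrac{4r}{6})$ one at a time, each at distance $> s/3$ from all previously chosen points, until no further point can be added. Maximality then forces every vertex of $B(x,\tfrac{4r}{6})$ to lie within distance $s/3$ of some $y_j$, hence within $s/6$ after enlarging: in fact I will take the centers $y_j$ and note $B(K,\tfrac{s}{6})$ covers $B(x,\tfrac{4r}{6})$ provided the separation constant is chosen a touch smaller (working with $(s/6)$-separation and covering radius $s/6$, say, so that maximality gives the covering and the balls $B(y_j,\tfrac{s}{12})$ are pairwise disjoint). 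The set $K$ is automatically contained in $B(x,\tfrac{4r}{6}) \subseteq B(x,\tfrac{5r}{6})$, so that requirement is free.

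The real content is the cardinality bound. Since the balls $B(y_j, \tfrac{s}{12})$ are pairwise disjoint and all contained in $B(x, \tfrac{4r}{6} + \tfrac{s}{12}) \subseteq B(x, r)$ (using $s \leq r/6$), a volume-packing argument gives
\begin{equation}
  \label{e:Kset_packing}
  |K| \cdot \min_j |B(y_j, \tfrac{s}{12})| \;\leq\; \sum_j |B(y_j,\tfrac{s}{12})| \;\leq\; |B(x,r)| \;\leq\; c_u r^{d_u},
\end{equation}
where the last step is $\mathcal{V}_u(d_u,c_u)$. To turn this into the desired bound $|K| \leq \useconst{c:Kset}\, r^{d_u}/s^{d}$ I need a lower bound on the volume of each small ball, and this is exactly where $\mathcal{V}_l(d_l,c_l)$ enters: $|B(y_j,\tfrac{s}{12})| \geq c_l (s/12)^{d_l}$ as long as $s/12 \geq 1$. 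Plugging this into \eqref{e:Kset_packing} yields $|K| \leq c(d_l,c_l,d_u,c_u)\, r^{d_u}/s^{d_l}$, which is even better than required since $d < d_l$. The role of the hypothesis $s \geq (\log r)^{2/(d_l - d)}$, then, is just to handle the regime where $s/12 < 1$: there the lower volume bound degenerates, but one can use the trivial bound $|B(y_j,\tfrac{s}{12})| \geq 1$ and the estimate $|K| \leq |B(x,r)| \leq c_u r^{d_u}$, and check that $r^{d_u} \leq (\useconst{c:Kset}/c_u)^{-1} r^{d_u} s^{-d} \cdot s^{d}$ holds because $s^{d} \leq (\log r)^{2d/(d_l-d)}$ is dwarfed by any positive power of $r$. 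Actually the cleanest route is: the constraint $s \geq (\log r)^{2/(d_l-d)}$ guarantees $s^{d_l - d} \geq (\log r)^2$, equivalently $r^{d_u}/s^{d_l} \leq r^{d_u}/((\log r)^2 s^{d})$, so a bound of the form $|K| \leq c\, r^{d_u}/s^{d_l}$ immediately upgrades to $|K| \leq c\, r^{d_u}/s^{d}$ with room to spare; and when $s$ is of constant order one simply absorbs everything into $\useconst{c:Kset}$.

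I expect the main obstacle to be bookkeeping the constants so that a single $\useconst{c:Kset}$ depending only on $(d_l, c_l, d_u, c_u, d)$ works uniformly across both regimes ($s/12 \geq 1$ and $s/12 < 1$) and for all $r \geq 1$, together with getting the various radius fractions ($s/3$ vs $s/6$ vs $s/12$, and $\tfrac{4r}{6}$ vs $\tfrac{5r}{6}$ vs $r$) to line up with the triangle inequality. None of this is deep — it is the standard volume-doubling-style packing estimate — but the statement has been phrased with specific constants and a specific lower cutoff on $s$, and the proof must respect them. I would also remark that if one only wanted the qualitative statement one could drop the $(\log r)$ cutoff entirely and allow the constant to depend on nothing new; the cutoff is there precisely to make the bound $r^{d_u}/s^{d}$ (rather than $r^{d_u}/s^{d_l}$) clean, which is what the later renormalization in \eqref{e:inductive_bernoulli} wants.
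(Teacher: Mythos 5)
Your proof is correct, but it takes a genuinely different route from the paper. The paper proves the lemma by the probabilistic method: each vertex of $B(x,\tfrac{5r}{6})$ is retained independently with probability $p=s^{-d}$, a union bound shows that the retained set $\mathcal{K}$ fails to $\tfrac{s}{6}$-cover $B(x,\tfrac{4r}{6})$ with probability at most $c_u r^{d_u}\exp\{-c\,s^{d_l-d}\}\leq 1/3$ --- and this is precisely where the cutoff $s\geq(\log r)^{2/(d_l-d)}$ is used --- while a Chebyshev (second moment) bound keeps $|\mathcal{K}|\leq \useconst{c:Kset} r^{d_u}s^{-d}$ with probability at least $2/3$, the finitely many small $r$ being absorbed into the constant. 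You instead take a maximal $(s/6)$-separated subset of $B(x,\tfrac{4r}{6})$: maximality gives the covering for free, and disjointness of the balls $B(y_j,\tfrac{s}{12})$ inside $B(x,r)$ combined with $\mathcal{V}_u(d_u,c_u)$ and $\mathcal{V}_l(d_l,c_l)$ gives the packing bound $|K|\leq c(d_l,c_l,d_u,c_u)\,r^{d_u}s^{-d_l}$ when $s\geq 12$, the regime $s<12$ being absorbed into the constant via the trivial bound $|K|\leq c_u r^{d_u}$. This deterministic argument is more elementary, yields the stronger exponent $s^{-d_l}$ (from which the stated $s^{-d}$ follows since $d<d_l$ and $s\geq 1$ in that regime), and, as you note, makes the lower cutoff on $s$ superfluous for this lemma; in the paper the cutoff is an artifact of tuning the Bernoulli parameter to $s^{-d}$, and it is harmless later since the application takes $s=L_k$, $r=L_{k+1}$. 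One sentence of yours is garbled (the inequality ``$r^{d_u}\leq(\useconst{c:Kset}/c_u)^{-1}r^{d_u}s^{-d}\cdot s^{d}$'' says nothing useful as written, and the appeal to $s^d$ being dwarfed by powers of $r$ is beside the point), but your subsequent fix --- absorb $s^d\leq 12^d$ into $\useconst{c:Kset}$ when $s$ is of constant order --- is the right one, so nothing essential is missing.
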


The proof will make use of the probabilistic method to show the existence of $K$.

\begin{proof}
  Let us first choose $p = s^{-d}$ and consider $(Z_y)_{\smash{y \in B(x, 5r/6)}}$ to be i.i.d. random variables with Bernoulli distribution of parameter $p$.

  We introduce the set of ones $\mathcal{K} = \{y \in B(x, \tfrac{5r}{6}); Z_y = 1\}$ and observe that, given $z \in B(x, \tfrac{4r}{6})$, calling $c_l' = c_l/6$,
  \begin{equation}
    \label{e:bound_K_cover}
    \begin{split}
      P[z \not \in & B(\mathcal{K}, s/6)] = (1-p)^{|B(z,s/6)|} \leq (1-p)^{c_l' s^{d_l}}\\
      & \leq \exp\{-c_l' s^{d_l} p\} = \exp\{-c_l' s^{d_l - d}\},
    \end{split}
  \end{equation}
  since $s \geq (\log r)^{\tfrac{2}{d_l - d}}$, for $r \geq c(d_l, c_l', d_u, c_u, d)$, one obtains
  \begin{equation}
      P[\exists z \in B(x, 4r/6); z \not \in B(\mathcal{K}, s/6)] \leq c_u r^{d_u} \exp\{-c_l' \log^2 r\} \leq 1/3.
  \end{equation}
  We now turn to the bound on $|\mathcal{K}|$.

  Observe that $E[|\mathcal{K}|] = |B(x,5r/6)|s^{-d} \leq c_u (5r/6)^{d_u} s^{-d}$, so that if $\useconst{c:Kset} > c(d_u, c_u)$,
  \begin{equation}
    \label{e:bound_K_size}
    \begin{split}
      P\big[|\mathcal{K}| & > \useconst{c:Kset} r^{d_u} s^{-d} \big] \leq P\big[|\mathcal{K}| > 2E[|\mathcal{K}|]\big] \leq \frac{\Var(|\mathcal{K}|)}{E[|\mathcal{K}|]^2}\\
      & \leq \frac{s^{-d}(1-s^{-d})|B(x,5r/6)|}{s^{-2d}|B(x,5r/6)|^2} \leq c(c_l) s^d r^{-d_l} \overset{s \leq r}\leq c(c_l) r^{d-d_l},
    \end{split}
  \end{equation}
  which is smaller or equal to $1/3$ for $r > c(d_l, c_l, d)$.

  Joining \eqref{e:bound_K_cover} and \eqref{e:bound_K_size}, we get that for $r \geq c(d_l, c_l, d_u, c_u, d)$, there exists a set $K \in B(x, 5r/6)$ such that the conditions in \eqref{e:Kset} hold.
  By possibly increasing the constant $\useconst{c:Kset}$ we can assure that the statement of the lemma also holds for the finitely many values of $r$ that have not been covered above.
  This finishes the proof of the lemma.
\end{proof}

\section{Separation events and renormalization}
\label{s:separation}

In this section we give the main building blocks of the proof of Theorems~\ref{t:bernoulli} and \ref{t:main}.
We start by introducing a definition that traces back from Definition~3.1 of \cite{T10}.

\begin{definition}
  \label{d:SxL}
  Given a configuration $\omega \in \{0,1\}^V$, we define the separation event
  \begin{equation}
    \label{e:SxL}
    \mathcal{S}(x,L) = \Bigg[
    \begin{array}{c}
      \text{there exist connected sets $A, B \subseteq B(x,3L/6)$}\\
      \text{with $d(A,B) > 1$, having diameters at least $L/100$ and}\\
      \text{such that no open path in $B(x, 6L/6)$ connects $A$ to $B$}
    \end{array}
    \Bigg].
  \end{equation}
  Recall Definition~\ref{e:connects} and see Figure~\ref{f:six_balls} for an illustration of this event.
\end{definition}

We would like to stress that the numbers $100$ and $6$ don't have a very important meaning.
Several other choices would lead to valid proofs as well.
Another important observation is that we write the fractions $1/6, 2/6, \dots, 6/6$ without simplifying the numerators and denominators so that the reader can readily see their order.

We intend to analyze the probability of the separation events $\mathcal{S}(x,L)$ as $L$ grows and we do this through a renormalization argument.
For this, fix $\gamma > 1$ and let us introduce
\begin{equation}
  \label{e:Lk}
  L_0 = 10000 \qquad \text{and} \qquad L_{k+1} = L_k^\gamma, \text{ for $k \geq 0$.}
\end{equation}
Let us observe that the value of $L_0$ is not important in what follows and that the sequence $L_k$ grows much faster than exponential, in fact $L_k = L_0^{\gamma^k}$.

A very important property of the separation events defined above is that they behave well with respect to scale change.
More precisely, in the next lemma we will show that the occurrence of the event $\mathcal{S}(x,L_{k+1})$ implies the occurrence of similar events in the previous scale $L_k$.

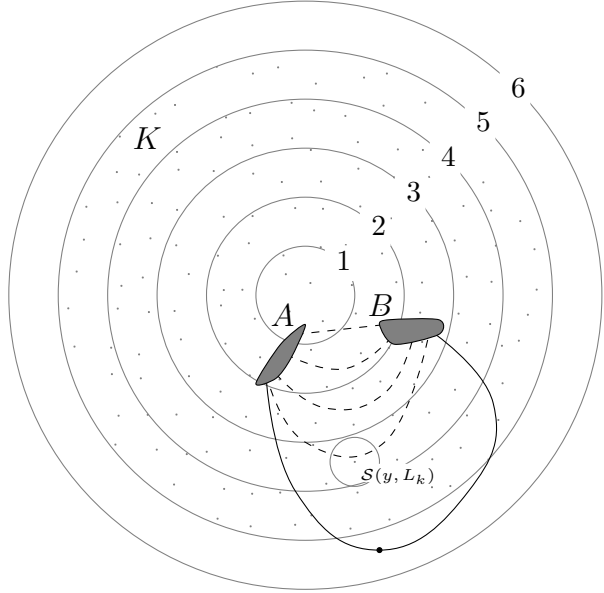
\begin{figure}[h]
  \centering
  \begin{tikzpicture}[scale=0.65]
    \begin{scope}
      \clip (6,6) circle (5);
      \foreach \x in {1,...,16}
      { \foreach \y in {1,...,16}
        \draw[color=gray,fill=gray] (0.7 * \x + 0.2 * rand, 0.7 * \y + 0.2 * rand) circle (0.01);
      }
    \end{scope}
    \draw[fill, color=white] (2.4,9.6) rectangle (3.2,8.8);
    \node at (2.8,9.2) {$K$};
    \draw[fill, color=white] (6.5,3.1) rectangle (7.5,2.1);
    \foreach \x in {1,...,6}
    { \draw[color=gray] (6, 6) circle (\x);
      \draw[fill, color=white](5.7 + 0.7071 * \x, 5.6 + 0.7071 * \x) rectangle (6.3 + 0.7071 * \x, 6.3 + 0.7071 * \x);
      \node[right] at (5.7 + 0.7071 * \x, 6 + 0.7071 * \x) {\small $\x$};
    }
    \draw[color=gray] (7,2.6) circle (.5);
    \draw[fill, color=gray] (7,2.6) circle (.01);
    \draw[fill, color=white] (7.1,2.55) rectangle (8.5,2.2);
    \node[below right] at (6.9,2.7) {\tiny $\mathcal{S}(y,L_k)$};
    \draw[dashed] plot [smooth, tension=2] coordinates {(5.8,5.2) (7.6,5.4)};
    \draw[dashed] plot [smooth, tension=.8] coordinates {(5.6,4.8) (6.8,4.5) (7.8,5.2)};
    \draw[dashed] plot [smooth, tension=.8] coordinates {(5.4,4.4) (6.4,3.7) (7.6,4) (8.2,5.2)};
    \draw[dashed] plot [smooth, tension=.8] coordinates {(5.2,4.4) (6,3) (7.6,3) (8.5,5.2)};
    \draw plot [smooth, tension=0.8] coordinates {(5.2,4.4) (5.8,2) (7.5,0.8) (9.3,2) (9.8,3.8) (8.5,5.3)};
    \draw[fill=gray] plot [smooth cycle] coordinates {(6,5.4) (5.5,5) (5,4.2) (5.4,4.3) (5.7, 4.7)};
    \draw[fill=gray] plot [smooth cycle] coordinates {(7.5,5.4) (7.8,5) (8.7,5.2) (8.7,5.5) (7.7, 5.5)};
    \node[left] at (6,5.6) {$A$};
    \node[left] at (8,5.8) {$B$};
    \draw[fill] (7.5,0.8) circle (0.05);
  \end{tikzpicture}
  \caption{The six balls $B(x,L_{k+1}/6), \dots, B(x,6L_{k+1}/6)$.
    The sets $A$ and $B$ from the definition of $S(x,L_{k+1})$ are pictured, together with a solid path connecting them.
    According to the definition of $S(x,L_{k+1})$, this solid path must pass through a closed vertex.
    The gray dots in the picture represent the set $K$ from Lemma~\ref{l:Kset}, while the dashed paths between $A$ and $B$ illustrate the statement of Lemma~\ref{l:disjoint_paths}.
    We also indicate the occurrence of the event $\mathcal{S}(y,L_k)$ as in Lemma~\ref{l:string}.
  }
  \label{f:six_balls}
\end{figure}

\newconst{c:SxL}
Let us pick $\useconst{c:SxL}$ large enough so that for $k \geq \useconst{c:SxL}$ we have
\begin{equation}
  \label{e:const_SxL}
  L_k < L_{k+1}/2000.
\end{equation}
This constant will be useful in the next lemma.

\begin{lemma}
  \label{l:string}
  Fix $x \in V$ and $k \geq \useconst{c:SxL}$ and assume the occurrence of the separation event $\mathcal{S}(x,L_{k+1})$.
  Consider a set $K \subseteq B(x,5L_{k+1}/6)$ such that $B(K, L_k/6)$ covers $B(x,4L_{k+1}/6)$ and a pair $A, B$ such that
  \begin{enumerate}[\quad \; a)]
  \item $A$ and $B$ are connected and contained in $B(x, 3L_{k+1}/6)$,
  \item their diameters are greater or equal to $L_{k+1}/1000$ and
  \item no open path in $B(x, 6L_k/6)$ connects $A$ and $B$.
  \end{enumerate}
  Note the similarity between these conditions and the ones in Definition~\ref{d:SxL}, see also Remark~\ref{r:1000}.
  Then, for every path $\sigma$ in $B(x, 4L_{k+1}/6)$ connecting $A$ and $B$, there exists $y \in K$ such that
  \begin{enumerate}[\quad i)]
  \item $\sigma$ intersects $B(y, L_k/6)$ and
  \item the event $\mathcal{S}(y,L_k)$ holds.
  \end{enumerate}
\end{lemma}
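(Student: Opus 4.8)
\emph{Proof strategy.}
The plan is to argue by contradiction: I would assume that $\mathcal{S}(y,L_k)$ fails for every $y \in K$ with $\sigma \cap B(y,L_k/6)\neq\varnothing$, and from this manufacture an open path joining $A$ and $B$ inside $B(x,6L_{k+1}/6)$, contradicting the hypothesis that no such path exists. (In particular $\sigma$ itself cannot be open, since it lies in $B(x,4L_{k+1}/6)\subseteq B(x,6L_{k+1}/6)$ and connects $A$ and $B$, so $\sigma$ has at least one closed vertex.) Two elementary facts make the two scales talk to each other: by the covering hypothesis every vertex of $\sigma\subseteq B(x,4L_{k+1}/6)$ lies within $L_k/6$ of a point of $K$; and, since $y\in K\subseteq B(x,5L_{k+1}/6)$ and $L_k<L_{k+1}/2000$ by \eqref{e:const_SxL}, every ball $B(y,6L_k/6)$ with $y\in K$ is contained in $B(x,6L_{k+1}/6)$. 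The second fact is what lets a \emph{local} open connection obtained at scale $L_k$ be re-used as a segment of the \emph{global} connection at scale $L_{k+1}$; and the first assertion of the lemma ($\sigma$ meeting $B(y,L_k/6)$) will come for free, because the $y$ that is produced lies within $L_k/6$ of a vertex of $\sigma$.

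For the construction I would decompose $\sigma$ into its maximal open sub-paths $O_1,\dots,O_r$ separated by the nonempty blocks $C_1,\dots,C_{r-1}$ of consecutive closed vertices, and process the blocks from the $A$-end, keeping an open path that reaches from $B(A,1)$ to the latest open vertex visited and bypassing each block by an open detour. To bypass a block $C$ one picks $y\in K$ near $C$ and produces two connected sets $A',B'$, of diameter at least $L_k/100$, with $d(A',B')>1$ and both contained in $B(y,3L_k/6)$: $A'$ is taken near $C$ from the open path already built (or, at the very first block, from $A$ itself, which is legitimate because the resulting detour then connects $A$ directly), and $B'$ near $C$ from the open part of $\sigma$ lying past $C$ (or, at the very last block, from $B$). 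Since $\mathcal{S}(y,L_k)$ is assumed to fail, $A'$ and $B'$ are joined by an open path inside $B(y,6L_k/6)\subseteq B(x,6L_{k+1}/6)$; as that open path starts within $1$ of $A'$ and ends within $1$ of $B'$, it attaches to the open path built so far and to the continuation of $\sigma$ (and, at the two ends, has its endpoints in $B(A,1)$ and $B(B,1)$), so it is exactly the detour around $C$. Splicing all the detours together with the pieces $O_i$ then yields the forbidden open $A$--$B$ path in $B(x,6L_{k+1}/6)$.

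The point I expect to be the main obstacle is the harvesting of the sets $A'$ and $B'$, together with the correct grouping of short stretches of $\sigma$. Each of $A',B'$ must simultaneously be connected, of diameter at least $L_k/100$, contained in $B(y,3L_k/6)$, and at distance $>1$ from its partner; when the open stretch adjacent to a block is shorter than $L_k/100$ one is forced to reach further — back along the part of $\sigma$ already traversed, or into $A$, and respectively forward along $\sigma$ or into $B$ — and to treat several short consecutive open/closed stretches as a single combined block, so that all of $A'$, $B'$ and the detour fit in one ball of radius $3L_k/6$. This is where the slack between the constants $100$ and $1000$ is spent, and where one uses that $A$ and $B$ are connected with $\diam A,\diam B\geq L_{k+1}/1000\gg L_k$, so that there is always a connected piece of $A$ (or of $B$) of diameter about $L_k/100$ available near the block. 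The remaining matters — the degenerate cases (a closed vertex at an endpoint of $\sigma$, or $d(A,B)\le 2$) and the verification that the spliced object is genuinely a path lying in $B(x,6L_{k+1}/6)$ — are routine.
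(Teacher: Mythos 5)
There is a genuine gap in your bypass construction: it implicitly assumes that each closed block of $\sigma$ is flanked, within a single ball $B(y,3L_k/6)$, by open material of $\sigma$ (or by $A$, $B$) from which the sets $A'$ and $B'$ can be harvested. But $\sigma$ is an arbitrary path in $B(x,4L_{k+1}/6)$ and need not contain a single open vertex; a ``block'' of closed vertices can have diameter of order $L_{k+1}\gg L_k$, and then there is no ``open part of $\sigma$ lying past $C$'' anywhere near the block, nor can one local detour obtained from the failure of one event $\mathcal{S}(y,L_k)$ bridge it. Your proposed remedy (merging short stretches into a combined block) does not help, since the combined block can itself be macroscopically long. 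The missing idea is that the sets $A',B'$ in Definition~\ref{d:SxL} are \emph{not} required to be open, so the forward target can be taken deterministic: the paper tracks the open cluster $\bar{\mathcal{C}}_A$ hanging off $A$ (in the configuration killed outside $B(x,6L_{k+1}/6)$), takes the \emph{first} index $i_o$ along $\sigma$ with $B(x_{i_o},L_k/6)\cap\bar{\mathcal{C}}_A=\varnothing$ (or $i_o=l$), picks $y\in K$ covering $x_{i_o}$, and witnesses $\mathcal{S}(y,L_k)$ with $A'$ a piece of $\bar{\mathcal{C}}_A\cup A$ near $y$ and $B'$ either a piece of $B$ (Case $i_o=l$) or simply the ball $B(x_{i_o},L_k/6-1)$ (Case 2); the diameter requirement for $A'$, $B'$ comes from $\diam A,\diam B\geq L_{k+1}/1000$ and \eqref{e:const_SxL}, not from open stretches of $\sigma$.

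Your overall logical frame (contradiction: if all nearby $\mathcal{S}(y,L_k)$ fail, produce an open $A$--$B$ connection in $B(x,6L_{k+1}/6)$) is workable, but to repair it you would have to let the open cluster of $A$ creep forward along $\sigma$ in steps of order $L_k$, using at each step a deterministic ball around a point of $\sigma$ ahead as the second set $B'$ --- at which point the argument is essentially the contrapositive of the paper's proof rather than a genuinely different route. Also note a small reading slip: condition (c) of the lemma (and the contradiction you aim for) concerns open connections in $B(x,6L_{k+1}/6)$, and it is there, not in the structure of $\sigma$ itself, that the separation hypothesis must be invoked.
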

See Figure~\ref{f:six_balls} for an illustration of the above lemma.

\begin{remark}
  \label{r:1000}
  Note that we ask the diameters of $A$ and $B$ to be at least $L_{k+1}/1000$, which is a weaker requirement than that of the definition of $\mathcal{S}(x, L_{k+1})$.
  The need for this will become clear in \eqref{e:AprimeBprime}, see the proof of Lemma~\ref{l:cascade}.
\end{remark}

\begin{proof}
  We define the following modification of the original configuration $\omega$
  \begin{equation}
    \bar \omega(x) =
    \begin{cases}
      0 \quad & \text{if $x \not \in B(x, 6 L_{k+1} / 6)$},\\
      \omega(x) & \text{otherwise}.
    \end{cases}
  \end{equation}
  We define $\bar{\mathcal{C}}_z$ to be the open component containing $z$ under the configuration $\bar \omega$.
  We also define
  \begin{equation}
    \bar{\mathcal{C}}_A = A \cup \bigcup_{z \in A} \bar{\mathcal{C}}_z.
  \end{equation}
  Note that $\bar{\mathcal{C}}_A \setminus A$ is open.

  For a path $\sigma$ as in the statement of the lemma, we denote its points by $x_0, x_1, \dots, x_l$, where $x_0 \in B(A,1)$ and $x_l \in B(B,1)$.
  Let us introduce
  \begin{equation}
    i_o = \min \{i = 0, \dots, l; B(x_i, L_k/6) \cap \bar{\mathcal{C}}_A = \varnothing\},
  \end{equation}
  where we take $i_o = l$ if the above set is empty.

  Since $x_{i_o} \in B(x, 4L_{k+1}/6) \subseteq B(K,L_k/6)$, there exists some $y \in K$ such that $x_{i_o} \in B(y, L_k/6)$.
  In particular $\sigma$ intersects $B(y,L_k/6)$.

  All we have to show now is that the event $\mathcal{S}(y, L_k)$ holds and we will do this splitting the proof in two distinct cases.

  {\bf Case 1:} $B(x_i,L_k/6) \cap \bar{\mathcal{C}}_A \neq \varnothing$, for every $i = 1, \dots, l$.

  \noindent In this case by our definition, $i_o = l$ so that $x_{i_o} \in B(B,1)$.
  This implies that
  \begin{enumerate}
  \item $B(y,2 L_k/6)$ intersects $\bar{\mathcal{C}}_A$ (since this ball contains $B(x_l, L_k/6)$) and
  \item $B(y,2 L_k/6)$ intersects $B$ (via $x_l$).
  \end{enumerate}
  Denote by $x_a$ (respectively $x_b$) an arbitrary point in the intersection of $B(y,2L_k/6)$ and $\bar{\mathcal{C}}_A$ (respectively $B(y, 2 L_k / 6) \cap B$).

  We will now define the sets $A'$ and $B'$ that confirm the occurrence of the event $\mathcal{S}(y, L_k)$.
  For this, consider the modified percolation configuration restricted to $B(y, 3L_k/6)$ and declared open in $A \cup B$, that is
  \begin{equation}
    \omega'(z) =
    \begin{cases}
      0, \quad & \text{if $z \not \in B(y, 3L_k/6)$,}\\
      1, & \text{if $z \in B(y, 3L_k/6) \cap A \cup B$ and}\\
      \omega(z), & \text{if $z \in B(y, 3L_k/6) \setminus A \cup B$.}
    \end{cases}
  \end{equation}
  We then let $A'$ and $B'$ be the open connected components under $\omega'$ containing $x_a$ and $x_b$ respectively.

  To finish this case, all we need to show is that
  \begin{gather}
    \label{e:ABprime_diam}
    \text{$A'$ and $B'$ are connected, have diameter at least $L_k/100$ and}\\
    \label{e:ABprime_separation}
    \text{no path in $B(x, 6L_k/6)$, connecting $A'$ to $B'$ is open.}
  \end{gather}

  To verify \eqref{e:ABprime_diam}, observe first that $A'$ and $B'$ are clearly connected and contained in $B(y, 3L_k/6)$, according to the definition of $\omega'$.
  Note as well that both $A'$ and $B'$ contain a point in $B(y, 2L_k/6)$, namely $x_a$ and $x_b$ respectively.
  Now, since the diameter of both $B$ and $\bar{\mathcal{C}}_A$ are larger or equal to $L_{k+1}/1000$ by hypothesis, they must not be contained in $B(y, 3L_k/6)$ by \eqref{e:const_SxL}.
  That means that both $A'$ and $B'$ must touch the internal boundary of $B(y, 3L_k/6)$, so that their diameters are at least $L_k/6$, proving \eqref{e:ABprime_diam}.

  We now turn to the proof of \eqref{e:ABprime_separation}.
  For this, let $\sigma'$ denote a path in $B(y, 6L_k/6)$ connecting $A'$ and $B'$.
  By the definition of $A'$ and $B'$, we can extend $\sigma'$ to a path $\sigma''$ connecting $B$ and $\bar{\mathcal{C}}_A$ without leaving $B(y, 6 L_k / 6)$ and only adding sites which are open in the $\omega$ configuration.
  Since $\sigma''$ is contained in $B(y,L_k) \subseteq B(x, L_{k+1})$, we know by $\mathcal{S}(x,L_{k+1})$ that $\sigma''$ cannot be open.
  This means that $\sigma'$ was not open to start with.
  This proves \eqref{e:ABprime_separation}, finishing the proof that $\mathcal{S}(y, L_k)$ holds in this Case~1.

  {\bf Case 2:} $B(x_{i_o},L_k/6) \cap \bar{\mathcal{C}}_A = \varnothing$.

  \noindent In this case, we know that $i_0 \geq 1$ (since $x_0 \in B(A,1)$ and $A \subseteq \bar{\mathcal{C}}_A$).
  Moreover, by the minimality of $i_o$, we have $B(x_{i_o - 1}, L_k/6) \cap \bar{\mathcal{C}}_A \neq \varnothing$, implying that $B(y, 2L_k/6 + 1) \cap \bar{\mathcal{C}}_A \neq \varnothing$.
  We pick $x_a$ to be an arbitrary point in this intersection.

  As in the previous case, we need to define the sets $A'$ and $B'$ that guarantee the occurrence of $\mathcal{S}(y,L_k)$.
  For this we define the modified percolation configuration restricted to $B(y, 3L_k/6)$ and open in $A$, that is
  \begin{equation}
    \omega'(z) =
    \begin{cases}
      0, \quad & \text{if $z \not \in B(y, 3L_k/6)$,}\\
      1, & \text{if $z \in B(y, 3L_k/6) \cap A$ and}\\
      \omega(z), & \text{if $z \in B(y, 3L_k/6) \setminus A$.}
    \end{cases}
  \end{equation}
  We then define $A'$ to be the connected component under $\omega'$ containing $x_a$ and $B' = B(x_{i_o}, L_k/6 - 1)$.
  It is clear that the diameters of both $A'$ and $B'$ are at least $L_k/100$ (here for $A'$ we use the same argument as in \eqref{e:ABprime_diam}).
  Therefore, all we need to show is \eqref{e:ABprime_separation} and for this we consider any path $\sigma'$ in $B(y,L_k)$ connecting $A'$ and $B'$.
  Should $\sigma'$ be open, then we would have $\bar{\mathcal{C}}_A$ neighboring $B'$, which contradicts the assumption defining Case~2.
  This shows that $\mathcal{S}(y,L_k)$ holds.

  Joining the two cases, we have shown that $\mathcal{S}(y, L_k)$ holds, finishing the proof of the lemma.
\end{proof}

We now use the isoperimetry of the graph $G$, together with the above lemma to show that the separation event $\mathcal{S}(x,L_{k+1})$ induces the occurrence of several separation events at the smaller scale $k$.
This cascading property is the main ingredient in the recursion inequalities leading to our main results.

\newconst{c:cascade}
\begin{lemma}
  \label{l:cascade}
  For a graph $G$ satisfying $\mathcal{L}(d_i, c_i)$ and $\mathcal{V}_u(d_u, c_u)$, fix $x \in V$, $k > \useconst{c:SxL}$ and assume that $S(x, L_{k+1})$ holds.
  Let us also fix a set $K \subseteq B(x, 5L_{k+1}/6)$ such that $B(K, L_k/6)$ covers $B(x, 4L_{k+1}/6)$.
  Then, there exist at least
  \begin{equation}
    \label{e:N}
    N := \Big\lfloor \useconst{c:cascade} L_k^{\gamma \big(\tfrac{d_i - 1}{d_i}\big) - (d_u - 1)} \Big\rfloor \text{ many points $y_j \in K$ such that $\mathcal{S}(y_j, L_k)$ holds.}
  \end{equation}
  Where $\useconst{c:cascade} = \useconst{c:cascade}(d_i, c_i, d_u, c_u)$.
  Moreover we can assume that $d(y_j, y_{j'}) \geq 3 L_k$ for every $1 \leq j < j' \leq N$.
\end{lemma}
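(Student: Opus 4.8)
The plan is to feed the hierarchical mechanism of Lemma~\ref{l:string} with the abundance of disjoint crossings guaranteed by $\mathcal{L}(d_i,c_i)$ through Lemma~\ref{l:disjoint_paths}, and then to thin out the resulting family of separation events so that they become well separated, carefully tracking the exponents.

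\textbf{Step 1: producing many crossings, each carrying a separation event at scale $L_k$.} Assuming $\mathcal S(x,L_{k+1})$, pick connected sets $A,B\subseteq B(x,3L_{k+1}/6)$ as in Definition~\ref{d:SxL}, with $d(A,B)>1$, diameter at least $L_{k+1}/100$, and joined by no open path inside $B(x,6L_{k+1}/6)$; replacing $A$ and $B$ by geodesic subpaths realizing their diameters, we may moreover take them to be paths, so $|A|\wedge|B|\ge L_{k+1}/100$. Since $A,B\subseteq B(x,3L_{k+1}/6)$, Lemma~\ref{l:disjoint_paths} furnishes $M:=\lceil c_i(L_{k+1}/100)^{(d_i-1)/d_i}\rceil$ vertex-disjoint paths $\sigma_1,\dots,\sigma_M$, all contained in $B(x,3L_{k+1}/6)\subseteq B(x,4L_{k+1}/6)$, connecting $A$ to $B$; note $M\ge c(d_i,c_i)\,L_k^{\gamma(d_i-1)/d_i}$ because $L_{k+1}=L_k^{\gamma}$. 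As $B(x,6L_k/6)\subseteq B(x,6L_{k+1}/6)$, the pair $A,B$ satisfies hypotheses $a)$--$c)$ of Lemma~\ref{l:string}, so for each $m$ there is $y_m\in K$ with $\sigma_m\cap B(y_m,L_k/6)\neq\varnothing$ and such that $\mathcal S(y_m,L_k)$ holds.

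\textbf{Step 2: greedy thinning.} Put $\mathcal P:=\{\sigma_1,\dots,\sigma_M\}$; while $\mathcal P\neq\varnothing$, select some $\sigma\in\mathcal P$, record the associated point $y_\sigma\in K$ (which carries $\mathcal S(\cdot,L_k)$), and delete from $\mathcal P$ every path meeting $B(y_\sigma,4L_k)$ (in particular $\sigma$, so the procedure terminates). If $y$ is recorded before $y'$, then at the time $y$ was chosen the path $\tau$ later producing $y'$ was still present, hence every vertex of $\tau$ is at distance $>4L_k$ from $y$; as $\tau$ meets $B(y',L_k/6)$, the triangle inequality gives $d(y,y')>4L_k-L_k/6\ge 3L_k$. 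Thus the recorded points form a $3L_k$-separated family in $K$, each with $\mathcal S(\cdot,L_k)$, and there are at least $M/R$ of them, where $R$ is the largest number of the $\sigma_m$ that can simultaneously meet one ball $B(y,4L_k)$; relabelling them $y_1,y_2,\dots$ yields the desired conclusion once $M/R\ge N$.

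\textbf{Step 3: the exponent, and the main obstacle.} It remains to prove $R\le c(d_u,c_u)\,L_k^{d_u-1}$; granting this, the number of recorded points is at least $c\,L_k^{\gamma(d_i-1)/d_i-(d_u-1)}$, which is $\ge N$ once $\useconst{c:cascade}$ is taken small enough (the finitely many small values of $k$ not covered by the asymptotics are absorbed by further shrinking $\useconst{c:cascade}$, using that at least one crossing, hence at least one point, always exists). Fix $y$. Since $\sum_r|S(y,r)|\le|B(y,6L_k)|\le c_u(6L_k)^{d_u}$, pigeonholing over the at least $L_k$ integer radii $r\in[4L_k,6L_k]$ --- a range in which $6L_k<L_{k+1}/200$ by \eqref{e:const_SxL}, as $k>\useconst{c:SxL}$ --- produces $r^\ast=r^\ast(y)\in[4L_k,6L_k]$ with $|S(y,r^\ast)|\le c_u(6L_k)^{d_u}/L_k=c(d_u,c_u)\,L_k^{d_u-1}$. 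Every $\sigma_m$ meeting $B(y,4L_k)$ then crosses $S(y,r^\ast)$: if $\sigma_m$ already reaches distance $\ge r^\ast$ from $y$ this is clear, and otherwise $\sigma_m\subseteq B(y,r^\ast)$, so, $A$ being a path of diameter $>2r^\ast$, we may prolong $\sigma_m$ along $A$ to a vertex outside $B(y,r^\ast)$ and thereby cross $S(y,r^\ast)$. Since the traces of the $\sigma_m$ inside $B(y,r^\ast)$ are vertex-disjoint, distinct paths cross $S(y,r^\ast)$ at distinct vertices, so $R\le c\,|S(y,r^\ast)|\le c\,L_k^{d_u-1}$. The delicate point --- and the place where the improvement from $L_k^{d_u}$ to $L_k^{d_u-1}$ is really earned --- is this crossing count when the crossing falls inside a prolongation through the \emph{shared} path $A$: here one exploits that $A$ is a path, so that such prolongations enter $B(y,r^\ast)$ through the $O(|S(y,r^\ast)|)$ ``runs'' of $A\cap B(y,r^\ast)$, and argues that each run can be entered by only boundedly many of the mutually disjoint $\sigma_m$. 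Everything else is routine bookkeeping; this counting is the crux of the lemma.
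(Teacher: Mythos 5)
Your overall skeleton (get many disjoint $A$--$B$ crossings from Lemma~\ref{l:disjoint_paths}, attach to each crossing a point of $K$ carrying $\mathcal{S}(\cdot,L_k)$ via Lemma~\ref{l:string}, then thin greedily to a $3L_k$-separated family) is the same as the paper's, and your Steps 1--2 are essentially fine. But there is a genuine gap exactly where you yourself locate ``the crux'': the bound $R\le c(d_u,c_u)L_k^{d_u-1}$ on the number of disjoint crossings meeting a single ball $B(y,4L_k)$ is not proved. Your sphere-crossing argument only gives disjoint crossing vertices for those $\sigma_m$ that themselves reach distance $r^*$ from $y$; for the short $\sigma_m\subseteq B(y,r^*)$ you prolong them through the \emph{shared} set $A$, so distinct paths can cross $S(y,r^*)$ at the very same vertices of $A$, and the claim that ``each run of $A\cap B(y,r^*)$ can be entered by only boundedly many of the disjoint $\sigma_m$'' is asserted, not argued --- and it is false as stated: a run of length $\ell$ can be entered by order $\ell$ vertex-disjoint paths (the degree is only bounded through $c_u$, and without further input the total length of $A$ inside the ball is only bounded by the volume $c_uL_k^{d_u}$, not by $L_k^{d_u-1}$). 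Your preliminary replacement of $A,B$ by ``geodesic subpaths'' does not rescue this: a genuine $G$-geodesic between two far points of $A$ may leave $B(x,3L_{k+1}/6)$ and may contain vertices outside $A$, destroying hypothesis $c)$ of Lemma~\ref{l:string}, while a path inside $A$ is not a $G$-geodesic, so the implicit control of $|A\cap B(y,r^*)|$ is lost. The troublesome scenario ($A$ and $B$ coming within distance $2$, producing many very short disjoint crossings concentrated in one $L_k$-ball) is real, and your argument does not exclude that order $L_k^{d_u}$ of them cluster there, which would only give $N\sim L_k^{\gamma(d_i-1)/d_i-d_u}$, too weak for the recursion.

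The paper removes this obstacle \emph{before} invoking Lemma~\ref{l:disjoint_paths}: using that $\diam(A),\diam(B)\ge L_{k+1}/100$, it extracts connected subsets $A'\subseteq A$, $B'\subseteq B$ with diameters at least $L_{k+1}/1000$ and, crucially, $d(A',B')\ge L_{k+1}/750$ (see \eqref{e:AprimeBprime}; this is precisely why Lemma~\ref{l:string} is stated with the relaxed threshold $L_{k+1}/1000$, cf.\ Remark~\ref{r:1000}). Then \emph{every} disjoint path joining $A'$ to $B'$ has diameter at least $L_{k+1}/750>L_k$ by \eqref{e:const_SxL}, hence any such path meeting $B(y',L_k/6)$ must place at least $L_k/6$ of its vertices inside $B(y',4L_k)$; disjointness and $\mathcal{V}_u(d_u,c_u)$ then give the clustering bound $c(d_u,c_u)L_k^{d_u-1}$ with no analysis of how paths attach to $A$. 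If you want to keep your route, you must either prove your per-run counting claim (which fails in general) or incorporate an analogue of the $A',B'$ separation step; as written, the lemma's exponent $\gamma\tfrac{d_i-1}{d_i}-(d_u-1)$ is not established.
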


It is important to observe that the above lemma is useless unless $d_i > 1$, which encompasses the intuition that the dimension of $G$ should be larger than one for percolation to take place.

\begin{proof}
  Since $\mathcal{S}(x, L_{k+1})$ holds, there exist $A, B \subseteq B(x, 3L_{k+1}/6)$ which are connected, have diameter at least $L_{k+1}/100$ and are separated in $B(x, 6L_{k+1}/6)$.
  Note that the distance between $A$ and $B$ could be as small as $2$, which would in fact weaken our arguments.
  As a first step in the proof we show that
  \begin{display}
    \label{e:AprimeBprime}
    there exist $A', B' \subseteq B(x, 3L_{k+1}/6)$ which are connected, have diameters at least $L_{k+1}/1000$, are separated in $B(x, 6L_{k+1}/6)$ and $d(A', B') \geq L_{k+1}/750$.
  \end{display}

  To see why this is the case, take any $a \in A$ and observe that, since the diameter of $B$ is at least $L_{k+1}/100$, it cannot be contained in $B(a, L_{k+1}/300)$.
  This way, let us pick an arbitrary $b \in B \setminus B(a, L_{k+1}/300)$.
  Since $A$ and $B$ are connected and have diameter at least $L_{k+1}/100$, we can find connected sets $A' \subseteq A$ and $B' \subseteq B$, contained respectively in $B(a, L_{k+1}/300)$ and $B(b, L_{k+1}/300)$ satisfying \eqref{e:AprimeBprime}.

  Given $A'$ and $B'$ as in \eqref{e:AprimeBprime}, we note that their volumes are bounded from below by their diameters, so that we can use Lemma~\ref{l:disjoint_paths} to obtain that
  \begin{display}
    there exist $N' = \Big\lceil c_i (L_{k+1}/1000)^{\frac{d_i - 1}{d_i}} \Big\rceil$ many disjoint paths $\sigma_1, \dots, \sigma_{N'}$, contained in $B(x, 4L_{k+1}/6)$ and connecting $A'$ to $B'$.
  \end{display}

  We now use Lemma~\ref{l:string} to conclude that there exist points $y'_1, \dots, y'_{N'} \in K$, such that
  \begin{enumerate} [\quad a)]
  \item $\sigma_i$ intersects $B(y'_i, L_k/6)$ and
  \item the event $\mathcal{S}(y'_i, L_{k})$ holds,
  \end{enumerate}
  for every $i \leq N'$.
  Note that the points $y'_1, \dots, y'_{N'}$ need not be distinct, only the paths $\sigma'_i$'s need be disjoint.

  We now have to verify that we can extract from $\{y'_i\}_{i=1}^{N'}$ a subset of $N$ points which are mutually far apart.
  For this, recall that the distance from $A'$ to $B'$ is at least $L_{k+1}/750$ (see \eqref{e:AprimeBprime}).
  This means that the diameter of each $\sigma_i$ must be at least $L_{k+1}/750$, which is larger than $L_k$ by \eqref{e:const_SxL}.
  This way we conclude that whenever a path $\sigma_i$ intersects $B(y'_i, L_k/6)$ it must intersect at least $L_k/6$ points in $B(y_i, 4L_k)$.
  Since all the paths $\sigma_i$ are disjoint,
  \begin{display}
    the number of paths $(\sigma_j)_{j \leq N'}$ that can intersect a given $B(y_i, 4L_k)$ is at most\\
    $\displaystyle \frac{6c_u (4 L_k)^{d_u}}{L_k} \leq c(d_u, c_u)L^{d_u-1}_k$.
  \end{display}
  This way, a simple counting argument shows that we can choose $\useconst{c:cascade}(d_i, c_i, d_u, c_u)$, such that there must be at least $N$ points $(y_i)_{i \leq N}$ chosen within the $y'_i$'s such that $d(y_i,y_{i'}) \geq 3L_k$, as required in the statement.
\end{proof}

In the next section we prove the main results of this article.

\section{Proofs of main results}
\label{s:proofs}

We start by proving Theorem~\ref{t:bernoulli}.
Although this result can be derived directly from Theorem~\ref{t:main}, we understood that giving its proof separately is a good warm up for the dependent case.

\begin{proof}[Proof of Theorem~\ref{t:bernoulli}]
  For the proof, we fix an arbitrary $d \in (0, d_l)$.
  Since $d_i > 1$, we can pick $\gamma > 1$ such that
  \begin{equation}
    \gamma \big( \frac{d_i - 1}{d_i} \big) > d_u - 1,
  \end{equation}
  which is then used in the definition of $L_k$ in \eqref{e:Lk}.

  The main step of the proof is to prove a fast decay for
  \begin{equation}
    \label{e:p_k}
    p_k = \sup_{x \in V} \mathbb{P}[\mathcal{S}(x, L_k)],
  \end{equation}
  which is done by induction.

  \newconst{c:s_logr}
  Given $k \geq 1$ and $x \in V$ we are going to explore the consequences of $\mathcal{S}(x, L_{k+1})$.
  We first apply Lemma~\ref{l:Kset} with $r = L_{k+1}$ and $s = L_k$.
  For $k \geq \useconst{c:s_logr} = \useconst{c:s_logr}(\gamma, d_l, d)$, we have $s \in [(\log r)^{2/(d_l - d)}, r/6]$.
  Lemma~\ref{l:Kset} gives us the existence of a set $K \subseteq B(x, 5L_{k+1}/6)$ such that
  \begin{enumerate}[\quad a)]
  \item $|K| \leq \useconst{c:Kset} \frac{L_{k+1}^{d_u}}{L_k^d} = \useconst{c:Kset} L_k^{\gamma d_u - d}$ and
  \item $B(x, 4L_{k+1}/6) \subseteq B(K, L_k/6)$.
  \end{enumerate}

  Our induction will rely on the cascading property of the separation events $\mathcal{S}(x, L_{k+1})$, as described in Lemma~\ref{l:cascade}.
  More precisely, we will use that $\mathcal{S}(x,L_{k+1})$ implies the occurrence of $J$ separation events at the smaller scale $k$.
  To choose $J$, pick some $\beta$ such that
  \begin{equation}
    \label{e:beta_bernoulli}
    \beta > (\gamma d_u - d) \vee \gamma(1 + \chi)
  \end{equation}
  then let $J \geq 2$ be an integer such that
  \begin{equation}
    \label{e:choose_J}
    J > \frac{\gamma \beta}{\beta - (\gamma d_u - d)}.
  \end{equation}
  \newconst{c:2points}
  We can now choose $\useconst{c:2points} = \useconst{c:2points}(d_i, c_i, d_u, c_u, \gamma, \beta, J) > \useconst{c:s_logr}$ such that for $k \geq \useconst{c:2points}$ we have
  \begin{equation}
    \Big\lfloor \useconst{c:cascade} L_k^{\gamma \big( \tfrac{d_i - 1}{d_i}\big) - (d_u - 1)} \Big\rfloor > J.
  \end{equation}
  We know by Lemma~\ref{l:cascade} that
  \begin{equation}
    \mathbb{P}[\mathcal{S}(x, L_{k+1})] \leq \mathbb{P}\Big[
    \begin{array}{c}
      \text{there exist $y_1, \dots, y_{J} \in K$, within distance $3L_k$}\\
      \text{and such that $\mathcal{S}(y_i, L_k)$ holds for all $i = 1, \dots, J$}
    \end{array}
    \Big],
  \end{equation}
  so that
  \begin{equation}
    \label{e:inductive_bernoulli}
    p_{k+1} \leq |K|^J p_k^J \leq \big(\useconst{c:Kset} L_k^{\gamma d_u - d}\big)^J p_k^J.
  \end{equation}
  We are going to show that
  \begin{equation}
    \label{e:pk_decay}
    p_k \leq L_k^{-\beta} \text{ for $k$ large enough}.
  \end{equation}

  \newconst{c:ratio_one}
  Suppose first that $p_k \leq L_k^{-\beta}$ and use \eqref{e:inductive_bernoulli} to estimate
  \begin{equation}
    \frac{p_{k+1}}{L_{k+1}^{-\beta}} \leq \useconst{c:Kset}^J L_k^{J(\gamma d_u - d) - J\beta + \gamma \beta} \leq \useconst{c:Kset}^J L_k^{-{\displaystyle (}J(\beta - (\gamma d_u - d)) - \gamma \beta{\displaystyle )}}.
  \end{equation}
  By the choice of $\beta$ in \eqref{e:beta_bernoulli} and $J$ in \eqref{e:choose_J}, we see that the above is smaller or equal to one for $k \geq \useconst{c:ratio_one} = \useconst{c:ratio_one}(J, \gamma, \beta, d_i, c_i, d_u, c_u, d_l, c_l)$.

  This means that if \eqref{e:pk_decay} holds for a given $k' \geq \useconst{c:ratio_one}$, then it must also hold for all $k > k'$.
  It is clear that as the percolation parameter $p$ converges to one, the probability of $\mathcal{S}(x, L_{k'})$ converges to zero uniformly over $x$ (since $B(x, L_{k'})$ will likely be completely open).
  Therefore, we know that for some $p$ close enough to one \eqref{e:pk_decay} holds .

  To finish the proof of the theorem, one should simply observe that $\beta > \gamma(1 + \chi)$ and employ the Lemma~\ref{l:lego} below.
\end{proof}

The renormalization scheme that we have employed above gives us the decay of the probabilities $p_k$ of the separation events.
The next lemma shows that this is enough to show the existence of an infinite connected component.

\begin{lemma}
  \label{l:lego}
  If for some choice of $\gamma > 1$ and $\beta > \gamma (1 + \chi)$, we have $p_k \leq L_k^{-\beta}$ for all $k \geq \bar k$, then
  \begin{equation}
    \label{e:unique_C_infty}
    \mathbb{P}\big[ \text{there exists a unique infinite open cluster $\mathcal{C}_\infty$} \big]  = 1.
  \end{equation}
  Moreover, for every $x \in V$,
  \begin{equation}
    \label{e:decay_C_x}
    \mathbb{P}\big[ L \leq \diam(\mathcal{C}_x) < \infty \big] \leq c(\gamma, \chi, \bar k) L^{-\chi}.
  \end{equation}
\end{lemma}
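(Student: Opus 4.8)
The plan is to turn the hypothesis $p_k \leq L_k^{-\beta}$ (for $k \geq \bar k$) into a statement about the non-occurrence of separation events at all large scales around a fixed vertex, and then deduce from this both the existence and uniqueness of an infinite cluster, as well as the quantitative tail bound on $\diam(\mathcal{C}_x)$. First I would fix $x \in V$ and observe that, since the sum $\sum_{k \geq \bar k} p_k \leq \sum_{k \geq \bar k} L_k^{-\beta} < \infty$ converges extremely fast (recall $L_k = L_0^{\gamma^k}$, so this is a doubly-exponentially decaying series), a Borel--Cantelli argument gives that $\mathbb{P}$-a.s. only finitely many of the events $\mathcal{S}(x, L_k)$ occur. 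More precisely, for any $m \geq \bar k$ the probability that $\mathcal{S}(x, L_k)$ occurs for some $k \geq m$ is at most $\sum_{k \geq m} L_k^{-\beta} \leq c(\gamma, \beta, \bar k)\, L_m^{-\beta}$, using that the first term dominates the geometric-like tail.

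The next step is the geometric heart of the argument: I would show that if $\mathcal{S}(x, L_k)$ fails for all $k \geq m$, then the cluster $\mathcal{C}_x$ (or a nearby cluster) is infinite, and moreover any two ``large'' clusters meeting $B(x, L_m)$ must in fact be connected. The mechanism is that the failure of $\mathcal{S}(x, L_k)$ means there are no two connected sets of diameter $\geq L_k/100$ inside $B(x, 3L_k/6)$ that are separated within $B(x, L_k)$; chaining this across scales $k = m, m+1, \dots$ forces a single open cluster to reach diameter $\geq L_k/100$ for every $k$, hence to be infinite, and forces uniqueness locally. Summing the failure probabilities and taking $m \to \infty$ (or intersecting over a cofinal sequence of base points, using that $V$ is countable and connected) yields \eqref{e:unique_C_infty}: there is $\mathbb{P}$-a.s. at least one infinite cluster, and globally at most one, since any two infinite clusters would both have to pass through some ball $B(x, L_m)$ with $m$ large and would then be forced to coincide there. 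This is the step I expect to be the main obstacle — one has to carefully produce the monotone chain of growing connected open sets from the successive non-separation statements, handling the constants $1/100$ versus $1/1000$ versus the ball radii $jL/6$, much in the spirit of the case analysis in the proof of Lemma~\ref{l:string}.

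Finally, for the quantitative bound \eqref{e:decay_C_x}: if $\diam(\mathcal{C}_x) \geq L$, pick $k$ minimal with $L_k/100 \geq L$, equivalently $L_k \asymp 100 L$ up to the doubly-exponential gaps, so $L \leq L_k^{1/\gamma}$ roughly; then on the event $\{L \leq \diam(\mathcal{C}_x) < \infty\}$ the cluster $\mathcal{C}_x$ has diameter at least $L \geq$ something comparable to $L_k/100$ but is finite, so it cannot keep growing, and this forces some separation event $\mathcal{S}(y, L_j)$ with $j \geq k$ and $y$ within a controlled distance of $x$ to occur — otherwise $\mathcal{C}_x$ would be infinite by the chaining argument above. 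Bounding the number of admissible centers $y$ at scale $L_j$ within $B(x, \text{poly}(L_j))$ by the volume bound (here only an upper bound on volumes is implicitly used, or one just takes the trivial bound since $G$ has finite geometry and the relevant region is finite) and summing $\sum_{j \geq k} |\{\text{centers}\}| \cdot p_j \leq \sum_{j \geq k} c\, L_j^{d_u} L_j^{-\beta}$, which (since $\beta > \gamma(1+\chi) > d_u$ can be arranged, or is assumed implicitly through the choice of $\beta$) is dominated by its first term $c\, L_k^{-(\beta - d_u)}$. Translating $L_k \geq (100 L)^{\gamma}$ back, and using $\beta > \gamma(1+\chi)$, this is at most $c(\gamma, \chi, \bar k)\, L^{-\chi}$, which is exactly \eqref{e:decay_C_x}. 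The passage from \eqref{e:decay_C_x} applied to $\mathcal{C}_x$ together with the volume bound then also recovers \eqref{e:second_cluster} of Theorem~\ref{t:bernoulli}, since $|\mathcal{C}_x| \leq c_u \diam(\mathcal{C}_x)^{d_u}$ converts a polynomial-in-$V$ statement into a polynomial-in-$\diam$ statement with an adjusted exponent.
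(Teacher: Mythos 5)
Your first step (Borel--Cantelli over scales at the \emph{single} center $x$) cannot deliver the ``geometric heart'' you describe, and this is a genuine gap rather than a technicality. The failure of $\mathcal{S}(x,L_{k})$ for all $k\geq m$ does give you, at each scale, \emph{some} open path of diameter of order $L_k$ inside $B(x,L_k)$, but it gives no way to put these paths into one cluster: the event $\mathcal{S}(x,L_{k+1})^c$ only connects pairs of sets \emph{both} of diameter at least $L_{k+1}/100$, whereas the open set produced at scale $k$ has diameter only of order $L_k=L_{k+1}^{1/\gamma}\ll L_{k+1}/100$, and the scale-$(k+1)$ path need not come anywhere near $B(x,L_k)$. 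In fact the single-center statement is false: on $\mathbb{Z}^2$, close a sequence of circuits around $x$ at radii $R_n$ chosen in the gaps between consecutive scales (a circuit of radius $R$ can only trigger some $\mathcal{S}(x,L_j)$ when $L_j$ lies in a window $[cR,CR]$ of bounded ratio, while $L_{k+1}/L_k\to\infty$, so such $R_n\to\infty$ exist) and leave everything else open; then no $\mathcal{S}(x,L_k)$ ever occurs, yet every open cluster is finite. This is why the paper's proof does not work at one center: it fixes an infinite geodesic ray (half-axis path) and demands failure of the separation events at the $L_{k+1}/L_k=L_k^{\gamma-1}$ centers $x_{k,i}$ spaced $L_k/6$ apart along the ray, at every scale $k\geq k_o$. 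At scale $k$ the resulting short open paths are chained into a path $\sigma_k$ of diameter at least $L_{k+1}/30$ --- i.e.\ comparable to the \emph{next} scale --- and only then can $\mathcal{S}(x,L_{k+1})^c$ be used to join $\sigma_k$ to $\sigma_{k+1}$. The union bound over these $L_k^{\gamma-1}$ centers per scale is precisely what the hypothesis $\beta>\gamma(1+\chi)$ (rather than just summability of $p_k$) is there to absorb.

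Two further mismatches with the lemma as stated. Your quantitative bound sums $p_j$ over all candidate centers in a ball of radius $\mathrm{poly}(L_j)$, using the volume bound $c_uL_j^{d_u}$ and implicitly requiring $\beta-d_u\geq\chi$; but Lemma~\ref{l:lego} assumes nothing about volumes and only $\beta>\gamma(1+\chi)$, and it must hold in that generality because Theorem~\ref{t:main} invokes it with $\beta$ tied to $\gamma,\chi,\alpha$ only. The paper avoids any volume input by using the one-dimensional family of centers along the ray (entropy $L_k^{\gamma-1}$ per scale): on the good event $\mathcal{G}_{k_o}$ one gets the dichotomy that either $x\in\mathcal{C}_\infty$ or $\diam(\mathcal{C}_x)\leq L_{k_o}$ (from $\mathcal{S}(x,L_{k_o})^c$), and $\mathbb{P}[\mathcal{G}_{k_o}^c]\leq c\,L_{k_o}^{-\gamma\chi}$ together with $L_{k_o}>L^{1/\gamma}$ yields \eqref{e:decay_C_x}; your exponent $\beta-d_u$ does not recover $L^{-\chi}$ under the stated hypotheses. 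Your uniqueness observation (two infinite clusters crossing a large ball around $x$ would trigger $\mathcal{S}(x,L_k)$ for all large $k$) is correct and is essentially the paper's argument, but the existence part needs the multi-center construction above.
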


\begin{proof}
  We start by fixing a path $\sigma: \mathbb{N} \to V$ satisfying the so-called \emph{half-axis property}, that is $d(\sigma(i), \sigma(j)) = |i - j|$ for all $i, j \in \mathbb{N}$.
  To see why such a path exists, fix a point $x \in V$ and consider geodesic paths $\sigma_{x, y}$ from $x$ to all vertices $y \in V$.
  Then construct $\sigma$ step by step, starting from $x$ and only following edges that have been used by infinitely many geodesics $\sigma_{x,y}$.
  A more detailed proof of this and stronger statements is provided for instance by Theorem~3.1 of \cite{Watkins1986341}.

  Given $\sigma$, we now fix the following collection of points
  \begin{equation}
    x_{k, i} = \sigma(i L_k / 6), \text{ for $k \geq 1$ and $i = 0, \dots, L_{k+1}/L_k$.}
  \end{equation}
  Now we claim that
  \begin{display}
    \label{e:C_infty}
    on the event $\mathcal{G}_{k_o} = \mcap_{k \geq k_o} \mcap_{i=0}^{L_{k+1}/L_k} \mathcal{S}(x_{k,i}, L_k)^c$, there exists a unique, infinite connected component $\mathcal{C}_\infty$. Moreover, either $x \in \mathcal{C}_\infty$ or $\diam(\mathcal{C}_x) \leq L_{k_o}$.
  \end{display}
  Before proving the above statement, let us see why this is enough to establish Lemma~\ref{l:lego}.

  Let us first estimate the probability of $\mathcal{G}_{k_o}^c$, for $k_o \geq \bar k$, by
  \begin{equation}
    \begin{split}
      \mathbb{P}[\mathcal{G}_{k_o}^c] & = \mathbb{P}\Big[ \mcup_{k \geq k_o} \mcup_{i=1}^{L_{k+1}/L_k} \mathcal{S}(x_{k,i}, L_k)
      \Big] \leq \sum\limits_{k \geq k_o} \sum\limits_{i=0}^{L_{k+1}/L_k} p_k\\
      & \leq \sum_{k \geq k_o} L_k^{\gamma - 1 - \beta} \overset{\beta > \gamma(1 + \chi) - 1}\leq \sum_{k \geq k_o} L_k^{-\gamma \chi} = L_{k_o}^{- \gamma \chi} \sum_{k \geq 0} (L_0^{-\gamma \chi})^{\gamma^{k_o}(\gamma^k - 1)}\\
      & \leq L_{k_o}^{- \gamma \chi} \sum_{k \geq 0} (L_0^{-\gamma \chi})^{\gamma^k - 1} \leq c(\gamma, \chi) L_{k_o}^{-\gamma \chi}
    \end{split}
  \end{equation}
  With the above bound and assuming \eqref{e:C_infty}, one gets \eqref{e:unique_C_infty} directly. Moreover, given $L \geq c(\bar k)$, we fix $k_o \geq \bar k$ such that $L_{k_o} \leq L < L_{k_o + 1}$ to estimate
  \begin{equation}
    \mathbb{P}\big[ L < \diam(\mathcal{C}_x) < \infty \big] \leq \mathbb{P}\big[ L_k < \diam(\mathcal{C}_x) < \infty \big] \overset{\eqref{e:C_infty}}\leq c(\gamma, \chi) L_{k}^{-\gamma \chi} \leq c(\gamma, \chi) L^\chi,
  \end{equation}
  as claimed in the statement of the lemma.
  To cover the finite values of $L$ not covered above, we can increase the constant $c(\gamma, \chi)$ to some $c(\gamma, \chi, \bar k)$.

  All we need to prove now is \eqref{e:C_infty}. We claim that for this it is enough to show that
  \begin{display}
    \label{e:one_infinite}
    on $\mathcal{G}_{k_o}$, there exists an infinite connected component\\that touches $B(x, L_{k_o}/30)$.
  \end{display}
  To see why this is enough, we start by observing that the uniqueness of the infinite cluster follows directly from $\mathcal{G}_{k_o}$, since the existence of two or more infinite connected components would trigger the occurrence of $\mathcal{S}(x_{k,0}, L_k) = \mathcal{S}(x, L_k)$ for all but a finite number of $k$'s.
  Moreover it is also a trivial consequence of $\mathcal{G}_{k_o}$ that either $x \in \mathcal{C}_\infty$ or $\diam(\mathcal{C}_x) \leq L_{k_o}$.
  Otherwise, we would have two separated components touching $B(x, L_{k_o}/30)$, with diameters at least $L_{k_o}$, contradicting the fact that $\mathcal{S}(x,L_{k_o})$ did not occur.

  Let us now turn to the proof of \eqref{e:one_infinite}, which will be done by constructing several small paths and joining them using the absence of separation events.

  For now, fix $k \geq k_o$.
  Given any $i = 0, \dots, (L_{k+1}/L_k) - 1$, we can use the fact that we are on $S(x_{k,i}, L_k)^c$ to obtain an open path $\sigma_{k,i} \subseteq B(x_{k,i}, 6L_{k}/6)$ connecting $B(x_{k,i}, L_k/30)$ to $B(x_{k,i+1}, L_k/30)$, see Figure~\ref{f:joining_paths}.

  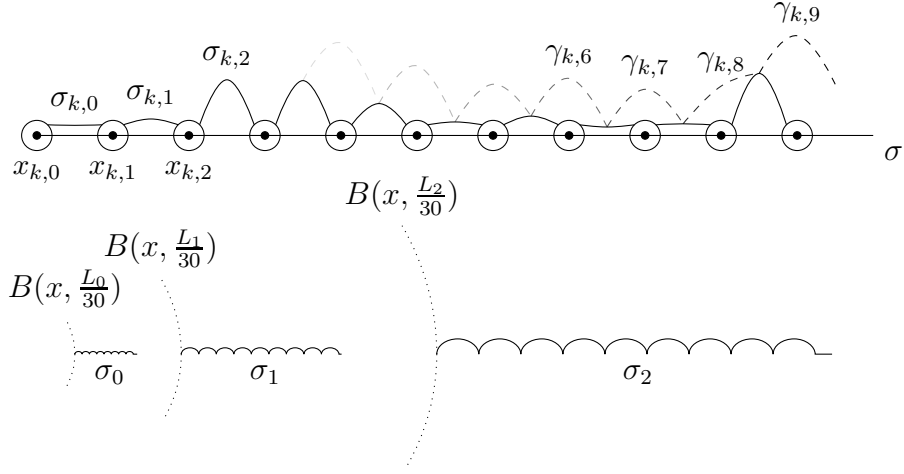
\begin{figure}[htbp]
    \centering
    \begin{tikzpicture}[scale=1]
      \draw (0,0) -- (11,0); \node[below right] at (11,0) {$\sigma$};
      \foreach \x in {0,...,10} {
        \draw[fill] (\x, 0) circle (.05);
        \draw (\x, 0) circle (0.2);
      }
      \foreach \x in {0,...,2} {
        \coordinate (a) at (\x + .5, .4 + .4 * rand);
        \draw plot [smooth, tension=0.8] coordinates {(\x + 0.14142, 0.14142) (a) (\x + 1 - 0.14142, 0.14142)};
        \node[above] at (a) {$\sigma_{k,\x}$};
        \node[below] at (\x,-.2) {\small $x_{k,\x}$};
      }
      \pgfmathparse{.4 + .4 * rand}
      \xdef\z{\pgfmathresult}
      \foreach \x in {3,...,9} {
        \pgfmathparse{.5 + .4 * rand};
        \xdef\y{\pgfmathresult};
        \draw plot [smooth, tension=0.8] coordinates {(\x + 0.14142, 0.14142) (\x + .5, \z) (\x + 1 - 0.14142, 0.14142)};
        \coordinate (a-\x) at (\x + 1, \z + .5);
        \draw[dashed, opacity=(\x-2)/7] plot [smooth, tension=0.8] coordinates {(\x + .5, \z) (a-\x) (\x + 1.5, \y)};
        \xdef\z{\y};
      }
      \foreach \x in {6,...,9} {
        \node[above] at (a-\x) {$\gamma_{k,\x}$};
      }
      \begin{scope}[xshift=-5mm]
        \foreach \k in {0,1,2} {
          \pgfmathparse{2.4^\k};
          \xdef\x{\pgfmathresult};
          \pgfmathparse{1.92*\x};
          \xdef\y{\pgfmathresult};
          \draw[decorate,decoration={bumps,amplitude=\x,segment length=\y mm, pre length=0mm}] plot coordinates {(\x, -2.9) (\y -.1, -2.9)};
          \pgfmathparse{(\x + \y)/2};
          \xdef\z{\pgfmathresult};
          \node[below] at (\z,-2.9) {$\sigma_\k$};
          \draw[dotted] (\x,-2.9) arc (0:30:{1.5* sqrt(\x-.6)}) node[above] {${\tiny B(x,\tfrac{L_\k}{30})}$}; \draw[dotted] (\x,-2.9) arc (0:-30:{1.5*sqrt{\x - .6}});
        }
      \end{scope}
    \end{tikzpicture}
    \caption{Above we see the paths $\sigma_{k,i}$ (for a fixed $k$) and the paths $\gamma_{k,i}$ that join them. Below we can see the paths $\sigma_k$ that were constructed in \eqref{e:sigma_prime_k}.}
    \label{f:joining_paths}
  \end{figure}

  We would like to joint the paths $\sigma_{k,i}$ into a single connected component (recall that the balls $B(x_{k,i}, L_k/30)$ that link them are not necessarily open).
  For this we will use the absence of separation again, but first we need to estimate their diameters
  \begin{equation}
    \diam(\sigma_{k,i}) \geq d(B(x_{k,i}, L_k/30), B(x_{k,i+1}, L_k/30)) \geq d(x_{k,i}, x_{k,i+1}) - \tfrac{L_k}{15} \geq \tfrac{L_k}{20}.
  \end{equation}
  Therefore, we are sure that before $\sigma_{k,i}$ has a chance to exit $B(x_{k,i}, 3L_k/6)$, it already has diameter at least $L_k/30$.
  This way, we can now obtain (again using that we are in $\mathcal{S}(x_{k,i},L_k)^c$) open paths $\gamma_{k,i}$ joining $\sigma_{k,i}$ to $\sigma_{k,i+1}$, for $i = 0, \dots, (L_{k+1}/L_k) - 2$.

  Consider now an open path $\sigma_k$ that visits the ranges of all $\sigma_{k,i}$ and $\gamma_{k,i}$, for $i \leq L_{k+1}/L_k - 1$.
  It is clear that
  \begin{display}
    \label{e:sigma_prime_k}
    $\sigma_k$ has diameter at least $d(x_{k,0}, x_{k,L_{k+1}/L_k}) - L_k/30 \geq L_{k+1}/30$, moreover we can assume that $\sigma_k$ starts from the ball $B(x,L_{k}/30)$.
  \end{display}
  The last step of the proof is to joint all the paths $(\sigma_k)_{k \geq k_o}$ to build an infinite connected component.
  An important remark here is that we could not have attempted to joint all the paths $(\sigma_{k,0})_{k \geq k_o}$ before building the $\sigma_k$'s.
  The reason is that the diameter of the $\sigma_{k,0}$ is comparable to $L_k$ and not $L_{k+1}$ as the $\sigma_k$.

  To finish the proof of \eqref{e:one_infinite}, note that on $\mathcal{G}_{k_o}$, for any $k \geq k_o$, the ranges of $\sigma_k$ and $\sigma_{k+1}$ must be on the same connected component.
  Indeed, since we are outside the event $\mathcal{S}(x, L_{k+1})$, before these paths have a chance to exit $B(x,3L_{k+1}/6)$ they must have already covered a diameter larger or equal to $L_{k+1}/100$.
  This finishes the proof of \eqref{e:one_infinite} and consequently of the lemma.
\end{proof}

We now turn to the proof of Theorem~\ref{t:main}, which deals with dependent percolation models.
But before, we will need a very basic consequence of Definition~\ref{d:decouple}.

\begin{lemma}
  \label{l:several_boxes}
  Suppose that $\mathbb{P}$ on $\{0,1\}^V$ satisfies the decoupling inequality $\mathcal{D}(\alpha, c_\alpha)$.
  Fix any choice of $r \geq 1$, an integer $J \geq 1$ and points $x_1, \dots, x_J \in V$ such that
  \begin{equation}
    \min_{1 \leq i < i' \leq J} d(x_i, x_{i'}) \geq 3r.
  \end{equation}
  Then
  \begin{equation}
    P(\mathcal{G}_1 \cap \dots \cap \mathcal{G}_J) \leq \big(P(\mathcal{G}_1) + c_\alpha r^{-\alpha} \big) \dotsm \big(P(\mathcal{G}_J) + c_\alpha r^{-\alpha} \big),
  \end{equation}
  for any decreasing events $\mathcal{G}_1, \dots, \mathcal{G}_J$ such that $\mathcal{G}_i \in \sigma(Z_y; y \in B(x_i,r))$.
\end{lemma}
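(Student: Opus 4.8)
The plan is to prove this by induction on $J$, peeling off one event at a time using the decoupling inequality $\mathcal{D}(\alpha,c_\alpha)$. The base case $J=1$ is immediate: $P(\mathcal{G}_1)\leq P(\mathcal{G}_1)+c_\alpha r^{-\alpha}$ trivially, since $c_\alpha, r^{-\alpha}\geq 0$. For the inductive step, suppose the claim holds for $J-1$ events. Given $\mathcal{G}_1,\dots,\mathcal{G}_J$ with the pairwise separation $d(x_i,x_{i'})\geq 3r$, I would like to apply $\mathcal{D}(\alpha,c_\alpha)$ to separate $\mathcal{G}_J$ (which is measurable with respect to $B(x_J,r)$) from the intersection $\mathcal{G}_1\cap\dots\cap\mathcal{G}_{J-1}$.

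The key point to check is that the "far away" hypothesis of Definition~\ref{d:decouple} is met. The event $\mathcal{G}_J$ lives in $\sigma(Z_y; y\in B(x_J,r))$, and I need the complementary event $\mathcal{G}_1\cap\dots\cap\mathcal{G}_{J-1}$ to live in $\sigma(Z_y; y\notin B(x_J,2r))$. For each $i<J$, the event $\mathcal{G}_i$ is measurable with respect to $B(x_i,r)$, and since $d(x_i,x_J)\geq 3r$, any $y\in B(x_i,r)$ satisfies $d(y,x_J)\geq d(x_i,x_J)-d(x_i,y)\geq 3r-r=2r$, hence $y\notin B(x_J,2r)$ (here I use that $B(x_J,2r)=\{y:d(y,x_J)\leq 2r\}$, and $2r\geq 2r$ does lie in the ball — so I should be slightly careful and note that $d(y,x_J)\geq 2r$ places $y$ on the boundary sphere at worst, which is still inside $B(x_J,2r)$; the cleaner bound $d(y,x_J)>2r$ would require $d(x_i,x_J)>3r$). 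Since the paper is not fastidious about such constant slack, I would simply observe that the union $\bigcup_{i<J}B(x_i,r)$ is disjoint from $B(x_J,r)$ and contained in the complement of a slightly smaller ball, or alternatively invoke the inequality in the form actually needed: $\mathcal{G}_J\in\sigma(Z_y;y\in B(x_J,r))$ and $\bigcap_{i<J}\mathcal{G}_i\in\sigma(Z_y; y\notin B(x_J,2r))$ follows from $d(x_i,x_J)\geq 3r$ for all $i<J$. Both $\mathcal{G}_J$ and $\bigcap_{i<J}\mathcal{G}_i$ are decreasing (a finite intersection of decreasing events is decreasing), so $\mathcal{D}(\alpha,c_\alpha)$ applies and gives
\begin{equation}
P(\mathcal{G}_1\cap\dots\cap\mathcal{G}_J)\leq P(\mathcal{G}_1\cap\dots\cap\mathcal{G}_{J-1})\,P(\mathcal{G}_J)+c_\alpha r^{-\alpha}.
\end{equation}

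By the inductive hypothesis, $P(\mathcal{G}_1\cap\dots\cap\mathcal{G}_{J-1})\leq\prod_{i=1}^{J-1}(P(\mathcal{G}_i)+c_\alpha r^{-\alpha})$, so using $P(\mathcal{G}_J)\leq 1$ and $c_\alpha r^{-\alpha}\geq 0$,
\begin{equation}
P(\mathcal{G}_1\cap\dots\cap\mathcal{G}_J)\leq\Big(\prod_{i=1}^{J-1}\big(P(\mathcal{G}_i)+c_\alpha r^{-\alpha}\big)\Big)P(\mathcal{G}_J)+c_\alpha r^{-\alpha}\leq\prod_{i=1}^{J}\big(P(\mathcal{G}_i)+c_\alpha r^{-\alpha}\big),
\end{equation}
where the last step just expands one factor of the product and drops the nonnegative cross terms. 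This completes the induction. The only genuinely delicate point is the measurability/geometry check for applying $\mathcal{D}(\alpha,c_\alpha)$ — making sure the separated region $\bigcup_{i<J}B(x_i,r)$ sits outside $B(x_J,2r)$, which is exactly why the hypothesis asks for $3r$ separation rather than $2r$; everything else is a routine bookkeeping of nonnegative error terms.
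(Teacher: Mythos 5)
Your overall architecture (induction on $J$, peeling off $\mathcal{G}_J$, and the geometric check that $\bigcup_{i<J}B(x_i,r)$ sits outside $B(x_J,2r)$ thanks to the $3r$ separation) is the same as the paper's, and your aside about the borderline case $d(y,x_J)=2r$ is a fair point that the paper glosses over too. The genuine problem is your last inequality. From \eqref{e:decouple} you correctly obtain
\begin{equation*}
P(\mathcal{G}_1\cap\dots\cap\mathcal{G}_J)\;\leq\; P(\mathcal{G}_1\cap\dots\cap\mathcal{G}_{J-1})\,P(\mathcal{G}_J)+c_\alpha r^{-\alpha},
\end{equation*}
but the step ``$\big(\prod_{i<J}(P(\mathcal{G}_i)+c_\alpha r^{-\alpha})\big)P(\mathcal{G}_J)+c_\alpha r^{-\alpha}\leq\prod_{i\leq J}(P(\mathcal{G}_i)+c_\alpha r^{-\alpha})$'' is not a matter of dropping nonnegative cross terms: expanding the right-hand side, it is equivalent to $c_\alpha r^{-\alpha}\leq c_\alpha r^{-\alpha}\prod_{i<J}\big(P(\mathcal{G}_i)+c_\alpha r^{-\alpha}\big)$, i.e.\ it requires $\prod_{i<J}(P(\mathcal{G}_i)+c_\alpha r^{-\alpha})\geq 1$. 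This fails whenever the factors are below one, which is exactly the regime in which the lemma is applied in the proof of Theorem~\ref{t:main} (there the factors are $p_k+c_\alpha L_k^{-\alpha}$, which are driven to zero). So the induction as you wrote it does not close.

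The paper's proof differs at precisely this point: it writes $P(\bigcap_{i\leq J}\mathcal{G}_i)=P(\bigcap_{i<J}\mathcal{G}_i)\,P(\mathcal{G}_J\mid\bigcap_{i<J}\mathcal{G}_i)$ and bounds the \emph{conditional} factor by $P(\mathcal{G}_J)+c_\alpha r^{-\alpha}$, so the error is absorbed multiplicatively into one factor of the eventual product at each peeling step; iterating this immediately yields $\prod_i\big(P(\mathcal{G}_i)+c_\alpha r^{-\alpha}\big)$. Be aware that this conditional bound is strictly stronger than the unconditional inequality you derived from \eqref{e:decouple} (the difference is exactly the factor $P(\bigcap_{i<J}\mathcal{G}_i)$ multiplying the error term), and this is not a cosmetic distinction: starting only from the additive, unconditional form, the natural outcome of your induction is a bound of the type $\prod_i P(\mathcal{G}_i)+J\,c_\alpha r^{-\alpha}$, which is not the product bound stated in the lemma. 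To reproduce the lemma you must, as the paper does, apply the decoupling at the level of conditional probabilities of the decreasing events (or otherwise justify that multiplicative form); simply invoking $P(\mathcal{G}_J)\leq 1$ and positivity of the error terms cannot bridge this gap.
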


\begin{proof}
  The result follows from the simple estimate
  \begin{equation}
    \begin{array}{e}
      P(\mathcal{G}_1 \cap \dots \cap \mathcal{G}_J) & = & P(\mathcal{G}_1 \cap \dots \cap \mathcal{G}_{J-1}) P(\mathcal{G}_J | \mathcal{G}_1 \cap \dots \cap \mathcal{G}_{J-1})\\
      & \overset{\mathcal{D}(\alpha, c_\alpha)}\leq & P(\mathcal{G}_1 \cap \dots \cap \mathcal{G}_{J-1}) \big(P(\mathcal{G}_J) + c_\alpha r^{-\alpha} \big)\\[2mm]
      & \leq & \dots \leq \big(P(\mathcal{G}_1) + c_\alpha r^{-\alpha} \big) \dotsm \big(P(\mathcal{G}_J) + c_\alpha r^{-\alpha} \big).
    \end{array}
  \end{equation}
  Establishing the desired inequality.
\end{proof}

We now turn to the case of dependent percolation.

\begin{proof}[Proof of Theorem~\ref{t:main}]
  Recall our assumption on the decay exponent
  \begin{equation}
    \alpha > \Big(1 \vee \frac{d_i (d_u - 1)}{d_i - 1} \Big) d_u - d_l.
  \end{equation}
  We can clearly find $\gamma > 1 \vee \big(d_i (d_u - 1)/(d_i - 1)\big)$ and $d < d_i$ such that
  \begin{equation}
    \alpha > \gamma d_u - d.
  \end{equation}
  We now use this $\gamma$ in the definition of $L_k$ in \eqref{e:Lk}.

  Given such $\gamma$, pick any $\beta > \alpha \vee \gamma(1 + \chi)$ (recall Lemma~\ref{l:lego}) and choose an integer $J = J(\alpha, \gamma, \beta, d_u, d) \geq 2$ such that
  \begin{equation}
    \label{e:J_main}
    J \big(\alpha - (\gamma d_u - d) \big) > \gamma \beta.
  \end{equation}

  As before, we intend to establish a fast decay for
  \begin{equation}
    \label{e:p_k}
    p_k = \sup_{x \in V} \mathbb{P}[\mathcal{S}(x, L_k)],
  \end{equation}
  which will be done by induction.

  \newconst{c:s_logr2}
  Given $x \in V$, for $k \geq \useconst{c:s_logr2}(\gamma,d_l,d)$, we can use Lemma~\ref{l:Kset} (with $s = L_k$, $r = L_{k+1}$) to obtain a set $K \subseteq B(x, 5L_{k+1}/6)$ such that
  \begin{enumerate}[\quad a)]
  \item $|K| \leq \useconst{c:Kset} \frac{L_{k+1}^{d_u}}{L_k^d} = \useconst{c:Kset} L_k^{\gamma d_u - d}$ and
  \item $B(x, 4L_{k+1}/6) \subseteq B(K, L_k/6)$.
  \end{enumerate}

  \newconst{c:2points2}
  Our induction will rely on the cascading property of the separation events $\mathcal{S}(x, L_k)$.
  We now fix $\useconst{c:2points2} = \useconst{c:2points2}(d_i, c_i, d_u, c_u, d, \gamma, \alpha, \beta, J) > \useconst{c:s_logr2}$ such that for $k \geq \useconst{c:2points2}$ we have
  \begin{equation}
    \Big\lfloor \useconst{c:cascade} L_k^{\gamma \big( \tfrac{d_i - 1}{d_i}\big) - (d_u - 1)} \Big\rfloor > J.
  \end{equation}

  Again by Lemma~\ref{l:cascade}, we have
  \begin{equation}
    \mathbb{P}[\mathcal{S}(x, L_{k+1})] \leq \mathbb{P}\Big[
    \begin{array}{c}
      \text{there exist $y_1, \dots, y_{J} \in K$, within distance $3L_k$}\\
      \text{and such that $\mathcal{S}(y_i, L_k)$ hold for all $i = 1, \dots, J$}
    \end{array}
    \Big],
  \end{equation}
  and using Lemma~\ref{l:several_boxes} one obtains
  \begin{equation}
    \label{e:inductive_dependent}
    p_{k+1} \leq \big(\useconst{c:Kset} L_k^{\gamma d_u - d}\big)^J (p_k + c_\alpha L_k^{-\alpha})^J.
  \end{equation}
  Again, our aim is to use induction to show that
  \begin{equation}
    \label{e:pk_decay_2}
    p_k \leq L_k^{-\beta} \text{ for $k$ large enough}.
  \end{equation}

  \newconst{c:k_dependent}
  Suppose first that $p_k \leq L_k^{-\beta}$ and use \eqref{e:inductive_dependent} to estimate
  \begin{equation}
      \frac{p_{k+1}}{L_{k+1}^{-\beta}} \leq (2\useconst{c:Kset})^J(c_\alpha \vee 1)^J L_k^{J(\gamma d_u - d) + \gamma \beta - J(\beta \wedge \alpha)} \overset{\beta \geq \alpha}\leq (2\useconst{c:Kset})^J(c_\alpha \vee 1)^J L_k^{- J(\alpha - (\gamma d_u - d)) + \gamma \beta}.
  \end{equation}
  using \eqref{e:J_main}, we conclude that the above is smaller or equal to one for any $k \geq \useconst{c:k_dependent}$, where the constant $\useconst{c:k_dependent}$ is allowed to depend on $d_i, c_i, d_u, c_u, d, \alpha, c_\alpha, \gamma, \beta$ and $J$.

  This means that if \eqref{e:pk_decay_2} holds for a given $k_o \geq \useconst{c:k_dependent}$, then it must also hold for all $k \geq k_o$.
  It is important to notice that the dependence of $\useconst{c:k_dependent}$ on $\mathbb{P}$ is made explicit through the constants $\alpha$ and $c_\alpha$.
  This allows us to conclude that there exists a $p_* = p_*(d_i, c_i, d_u, c_u, \alpha, c_\alpha, J, \gamma, \beta) < 1$ such that, if $\inf_{x \in V} \mathbb{P}[\text{$x$ is open}] > p_*$, then \eqref{e:pk_decay_2} holds for $k_o$ and therefore for every $k \geq k_o$.

  To finish the proof one should simply recall that we have chosen $\beta > \gamma(1 + \chi)$ and employ the Lemma~\ref{l:lego} again.
  For the simple existence and uniqueness of the infinite open cluster, we can drop the dependence of $p_*$ on $J, \gamma$ and $\beta$ as in the statement of the theorem, since we can fix their values.
\end{proof}

\section{Open questions and remarks}
\label{s:questions}

In writing this paper, we have tried to balance between generality and simplicity.
In particular, we believe that there should be plenty of room for improvements in the presented results.
Below we point out some interesting directions and questions to pursue.

\begin{remark}
  \label{r:questions}
  $(a)$ The decay in \eqref{e:second_cluster} is not sharp, as we briefly commented below Theorem~\ref{t:bernoulli}.
  It would be interesting to obtain better tails for the size of finite percolation clusters and compare them with lower bounds obtained by explicit examples.

  \vspace{4mm}
  $(b)$ The relation between isoperimetric inequalities and heat kernel estimates for simple random walks on graphs is very well established and has resulted in extensive research, see for instance \cite{Sal97}, \cite{W00} and \cite{zbMATH05636419}.
  We believe that the same should be tried in the context of percolation, both improving the results of this article and providing examples of graphs satisfying local isoperimetric inequalities.

  \vspace{4mm}
  $(c)$ In Remark~\ref{r:isoperimetric} we have exhibited two examples of graphs satisfying \eqref{e:standard_iso} but not Definition~\ref{d:isoperimetric}.
  However we have not been able to find an example of a transitive, amenable graph that lies in between these two definitions.
  This raises the question of how \eqref{e:standard_iso} and Definition~\ref{d:isoperimetric} relate to each other when one imposes further conditions on the graph.
  Answering this question would of course have direct consequences to Question~\ref{q:BS} and to the classes of graphs covered by Corollary~\ref{c:ising}.

  \vspace{4mm}
  $(d)$ A trivial consequence of Theorem~\ref{t:bernoulli} is the following.
  If for some graph $G$ there is percolation at $p_c$, then no infinite critical cluster can contain a sub-graph satisfying the hypothesis of Theorem~\ref{t:bernoulli}.
  However, proving the existence of such a sub-graph, only assuming that there is percolation at criticality, seems to be a very difficult task.
\end{remark}

\renewcommand{\theequation}{A.\arabic{equation}}
\renewcommand{\thetheorem}{A.\arabic{theorem}}
\setcounter{theorem}{0}

\bibliographystyle{../BibTeX/jcamsalpha}
\bibliography{../BibTeX/all}

\end{document}